\documentclass[12pt,a4paper]{amsart}
\usepackage[left=2.4cm,right=2.4cm,top=3.0cm,bottom=2.8cm, footskip=18pt]{geometry}
\usepackage{amsmath,amssymb,amsfonts,amscd,amsthm,wasysym,xcolor,enumitem,indentfirst,graphicx,booktabs,mathtools}
\usepackage[bookmarksnumbered,colorlinks,linktocpage,pagebackref,plainpages]{hyperref}
\hypersetup{pdfstartview={FitH}}

\newtheorem{thmx}{Theorem}

\numberwithin{equation}{section}
\newtheorem{theorem}{Theorem}[section]

\newtheorem{lemma}[theorem]{Lemma}

\theoremstyle{definition}

\newtheorem*{remark}{Remark}

\newtheorem{Qx}[thmx]{Question}

\newcommand{\ddc}{{\rm dd^c}}
\newcommand{\dd}{\mathrm d}

\DeclareMathOperator{\rank}{rank}

\allowdisplaybreaks[4]
\renewcommand{\arraystretch}{1.2}

\makeatletter
\@namedef{subjclassname@2020}{\textup{2020} Mathematics Subject Classification}
\makeatother

\begin{document}
\title[Pluricomplex Poisson kernel and complex Monge--Amp\`{e}re equations]{On the pluricomplex Poisson kernel and the associated complex Monge--Amp\`{e}re equation}
\date{\today}

\author{Xieping Wang}
\address{CAS Wu Wen-Tsun Key Laboratory of Mathematics and School of Mathematical Sciences, University of Science and Technology of China, Hefei 230026, Anhui, People's Republic of China}
\email{xpwang008@ustc.edu.cn}

\thanks{The author was partially supported by the NSFC (Grant No. 12371083) and the Fundamental Research Funds for the Central Universities (Grant Nos. WK0010000099 and WK3470000030).}

\subjclass[2020]{Primary 32U35, 35J96, 32U15; Secondary 32F17, 32W20}
\keywords{Pluripotential theory, pluricomplex Poisson kernel, complex Monge--Amp\`{e}re equations, Monge--Amp\`{e}re foliations, strongly linearly convex domains}

\dedicatory{To Li and Rui'an}

\begin{abstract}
We give a geometric characterization of the pluricomplex Poisson kernel of bounded strongly linearly convex domains in $\mathbb C^{n+1}$ in terms of their foliation by all complex geodesic discs whose closure  contains a fixed boundary point, thus confirming a conjecture posed by Bracci et al. in 2009, even in  greater generality. The proof relies crucially on our previous results obtained in a series of two papers. We also revisit the homogeneous complex Monge--Amp\`{e}re equation associated with the pluricomplex Poisson kernel and show that its solutions are far from unique in general by constructing a family of pairwise nonproportional continuous solutions on the unit ball in $\mathbb C^{n+1}$. This sharply contrasts with the uniqueness of solutions to  the corresponding equation for the pluricomplex Green function.
\end{abstract}

\maketitle

\section{Introduction}
As is well known, the Green function and the Poisson kernel are two fundamental objects in classical potential theory. In the planar case, they transform naturally under conformal mappings and hence play an important role in complex analysis. In higher complex dimensions, their counterparts in pluripotential theory are the pluricomplex Green
function and the pluricomplex Poisson kernel.

To be specific, let $\Omega$ be a bounded domain in $\mathbb C^{n+1}$ and let  ${\rm Psh}(\Omega)$ denote the set of all plurisubharmonic functions on $\Omega$. Then the pluricomplex Green function $g_{\Omega}(\,\cdot\,,w)$ of $\Omega$ with pole at $w\in\Omega$ is defined to be
\begin{equation*}\label{Greenkernel}
   g_{\Omega}(\,\cdot\,,w) \coloneqq \sup\Big\{u\in {\rm Psh}(\Omega)\!: u<0,
   \;\limsup\limits_{z\to w}(u(z)-\log|z-w|)<\infty \Big\}.
\end{equation*}
When $\Omega$ is hyperconvex, i.e. there exists a negative continuous function $u\in {\rm Psh}(\Omega)$ such that $\{u<t\}\subset\subset \Omega$ for all $t<0$, a classical result of Demailly \cite{Demailly87} states that for every $w\in\Omega$, $g_{\Omega}(\,\cdot\,,w)$ is continuous and is the unique solution to the homogeneous complex Monge--Amp\`{e}re equation
\begin{equation}\label{eq:Inter-MA}
  \begin{cases}
   u\in {\rm Psh}(\Omega)\cap
   L_{{\rm loc}}^{\infty}(\Omega\!\setminus\!\{w\}), \\
   (\ddc u)^{n+1}=0        &{\rm on}\,\,\ \Omega \!\setminus\! \{w\}, \\
   \lim_{z\to x}u(z)=0     &{\rm for}\,\  x\in\partial\Omega, \\
   u(z)-\log|z-w|=O(1)     &{\rm as} \, \ \ z\to w.\\
  \end{cases}
\end{equation}
Actually, this equation was first investigated by Lempert \cite{Lempert81, Lempert83, Lempert84} on a restricted class of bounded strongly linearly convex domains $\Omega\subset\mathbb C^{n+1}$ with sufficiently smooth boundary, using his theory of complex geodesics. In particular, he discovered a one-to-one correspondence between the pluricomplex Green function $g_{\Omega}(\,\cdot\,,w)$ and the singular foliation of the underlying domain $\Omega$ by all complex geodesic discs passing through $w$.
We refer the reader to the surveys \cite{Bracci_survey10, Blocki-survey14, Chen-Survey15} for more results concerning the pluricomplex Green function and its profound applications, and to \cite{Bracci-JEMS10, Chen-APDE17} for recent developments on related topics.


In 2005, Bracci, Patrizio and Trapani \cite{Bracci-MathAnn05, Bracci-Trans09} initiated a boundary analogue of the above theory, building on the foundational work of Lempert \cite{Lempert81, Lempert84} (see also \cite{Abatebook, KW13}), Chang--Hu--Lee \cite{Chang--Hu--Lee88} and Huang \cite{Huang-Illinois94, Huang-Pisa94}; see also \cite{Huang-Wang22, Bracci-Adv21, Wang24, ABF25} for recent developments in this direction. To be more specific, let $\Omega\subset\mathbb C^{n+1}$ be a bounded \textit{strongly linearly convex}\footnote{\,The precise definition plays no role in the main results of this paper, so we do not explicitly introduce it here but refer the interested reader to, e.g., \cite{Andersson04, Huang-Wang22} for details.} domain with $C^3$-boundary and let $\langle\;,\,\rangle$ denote the standard Hermitian inner product on $\mathbb C^{n+1}$.
Then Bracci et al. \cite{Bracci-MathAnn05, Bracci-Trans09} (see also \cite{Huang-Wang22, Bracci-Adv21}) defined the pluricomplex Poisson kernel $P_{\Omega,\, p}$ of $\Omega$ with singularity at $p\in\partial\Omega$ as follows:
\begin{equation*}\label{Poissonkernel}
  \begin{split}
   P_{\Omega,\, p}
   \coloneqq \sup\Big\{&u\in {\rm Psh}(\Omega)\!: \limsup_{z\to x} u(z)\leq 0 \ \hbox{for all}
   \ x\in \partial \Omega\!\setminus\!\{p\},\\
   &\liminf_{t\to 1^{-}} |u(\gamma(t))(1-t)|
   \geq 2\,{\rm Re}\, \langle\gamma'(1),\, \nu_p\rangle^{-1} \ \hbox{for all}\ \gamma\in \Gamma_p
   \Big\},
  \end{split}
\end{equation*}
where $\Gamma_p$ is the set of all non-tangential $C^1$ curves $\gamma\!:[0, 1]\to \Omega\cup\{p\}$ terminating at $p$ and with $\gamma([0,1))\subset\Omega$, and $\nu_p$ denotes the unit outward normal to $\partial\Omega$ at $p$. Also they (and Huang--Wang  \cite{Huang-Wang22}) proved that $P_{\Omega,\, p}$ is a continuous solution to the following homogeneous complex Monge--Amp\`{e}re equation on $\Omega$ with $p\in\partial\Omega$, which is a boundary analogue of equation \eqref{eq:Inter-MA}:
\begin{equation}\label{eq:Bdry-MA}
  \begin{cases}
   u\in {\rm Psh}(\Omega)\cap L_{{\rm loc}}^{\infty}(\Omega), \\
   (\ddc u)^{n+1}=0         &{\rm on} \,\,\ \Omega, \\
   \lim_{z\to x}u(z)=0      &{\rm for} \,\  x\in\partial\Omega\!\setminus\!\{p\}, \\
   u(z)\approx -|z-p|^{-1}  &{\rm as}  \, \ \ z\to p\, \ {\rm nontangentially},
  \end{cases}
\end{equation}
where the last condition means that for every $\alpha>1$, there exists a constant $C_{\alpha}>1$ such that
   $$
   C_{\alpha}^{-1} <-u(z)|z-p|<C_{\alpha}
   $$
holds for all $z\in \Gamma_{\alpha}(p)$ sufficiently close to $p$, and
   $$
   \Gamma_{\alpha}(p)=\big\{z\in\Omega\!: |z-p|<\alpha\, {\rm dist}(z,\, \partial\Omega)\big\}.
   $$
is the non-tangential approach region in $\Omega$ with vertex $p$ and aperture $\alpha$. Here, ${\rm dist}(\,\cdot\,,\partial\Omega)$ denotes the Euclidean distance to the boundary $\partial\Omega$.

Indeed, Bracci, Patrizio and Trapani proved the preceding result by making essential use of the foliation\footnote{\,The existence of such a foliation for strongly linearly convex domains with $C^3$-smooth boundary was established in \cite{Huang-Wang22}; see also \cite{Chang--Hu--Lee88} for earlier partial results.} of the domain $\Omega$ by all complex geodesic discs whose closure contains the boundary point $p$. Roughly speaking, $P_{\Omega,\, p}$ can be reconstructed by gluing together in a continuous manner the Poisson kernels on the individual leaves of this foliation. This naturally raises the following two questions:

\begin{Qx}\label{Q:uniqueness1}
Is the pluricomplex Poisson kernel $P_{\Omega,\, p}$, up to a positive constant multiple, the only plurisubharmonic function that can arise from such a gluing construction?
\end{Qx}

\begin{Qx}\label{Q:uniqueness2}
Is the pluricomplex Poisson kernel $P_{\Omega,\, p}$, up to a positive constant multiple, the unique solution to equation \eqref{eq:Bdry-MA}?
\end{Qx}


When $\Omega$ is strongly convex with $C^\infty$-boundary, Bracci, Patrizio, and Trapani \cite{Bracci-Trans09} proved a related result that gives an affirmative answer to Question \ref{Q:uniqueness1} under the additional assumption that $u$ is $C^2$-smooth. They further conjectured that this regularity assumption could be removed. The following theorem gives an affirmative answer to Question \ref{Q:uniqueness1}, thus confirming their conjecture even in the more general setting of strongly linearly convex domains. It is also worth noting that, even in the $C^2$-smooth case, our proof here seems to be simpler than theirs (see Section \ref{sect:Pf-MainThm1} for comparison and further explanation).

\begin{theorem}\label{thm:main1}
Let $\Omega\subset\mathbb C^{n+1}$ be a bounded strongly linearly convex domain with $C^{5,\,\alpha}$-boundary, where $0<\alpha<1$, and let $p\in\partial\Omega$. Suppose $u\in {\rm Psh}(\Omega)$ satisfies that $\lim_{z\to x}u(z)=0$ for all $x\in\partial\Omega \!\setminus\! \{p\}$ and that its restriction to each complex geodesic disc of $\Omega$ whose closure contains $p$ is harmonic. Then there exists $c\ge 0$ such that
  $$
  u=cP_{\Omega,\, p}.
  $$
\end{theorem}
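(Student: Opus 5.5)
The plan is to exploit the foliation of $\Omega$ by the complex geodesic discs whose closure contains $p$, and to reduce the statement to a one-variable Poisson-kernel argument on each leaf together with a rigidity argument across leaves. I would parametrize the leaves in the usual way (as in Huang--Wang \cite{Huang-Wang22} and Bracci et al.\ \cite{Bracci-Trans09}). Each leaf is $\varphi_v(\mathbb D)$, with $\varphi_v\colon\mathbb D\to\Omega$ a complex geodesic normalized so that $\varphi_v(1)=p$. The parameter $v$ ranges over a set homeomorphic to $\mathbb C^n$, for instance the complex tangent directions at $p$, or the spherical representation coming from the boundary pluricomplex Poisson kernel. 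I would use the $C^{5,\alpha}$ regularity to get that $(v,\zeta)\mapsto\varphi_v(\zeta)$ is a homeomorphism of $\mathbb C^n\times\mathbb D$ onto $\Omega$, with enough smoothness up to $\partial\mathbb D\setminus\{1\}$ and good uniform control near $\zeta=1$. Along each leaf one has $P_{\Omega,p}\circ\varphi_v(\zeta)=-\lambda(v)\,\frac{1-|\zeta|^2}{|1-\zeta|^2}$ for some positive function $\lambda$; after a suitable normalization of the leaves one even has $\lambda\equiv$ const.

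\textbf{Step 1 (leafwise reduction).} The function $h_v:=u\circ\varphi_v$ is harmonic on $\mathbb D$ and tends to $0$ at every point of $\partial\mathbb D\setminus\{1\}$, because $\varphi_v$ extends continuously to $\overline{\mathbb D}$ with $\varphi_v(\partial\mathbb D\setminus\{1\})\subset\partial\Omega\setminus\{p\}$. Since $u$ is plurisubharmonic and bounded above on $\Omega$ away from $p$, and $\limsup u\le 0$ on the boundary, the maximum principle gives $u\le 0$ unless things blow up at $p$. Hence $h_v\le 0$. A nonpositive harmonic function on $\mathbb D$ with zero boundary values off $\{1\}$ is, by Herglotz/Riesz representation, a nonnegative multiple of the Poisson kernel at $1$. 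So $h_v(\zeta)=-c(v)\frac{1-|\zeta|^2}{|1-\zeta|^2}$ with $c(v)\ge0$, that is, $u=\frac{c(v)}{\lambda(v)}P_{\Omega,p}$ on each leaf. Write $f(v)=c(v)/\lambda(v)$, which is constant on leaves. The task is then to show that $f$ is constant.

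\textbf{Step 2 (rigidity across leaves).} Consider $w:=u-f(v_0)P_{\Omega,p}$, or more cleverly the quotient $u/P_{\Omega,p}$. The latter is a function $F=f\circ\pi$, where $\pi\colon\Omega\to\mathbb C^n$ is the leaf projection. The key point I would try to prove is that $\pi$ is "pluri-harmonic-like": the level sets of $P_{\Omega,p}$ (horospheres) together with the foliation realize $\Omega$ via the previous papers' results as a model where $P_{\Omega,p}\circ\Phi(v,\zeta)$ depends only on $\zeta$. I would then use the known results that the leaves are the Monge--Amp\`ere foliation of $P_{\Omega,p}$ and that $\log(-P_{\Omega,p})$ or the horospheres carry the structure. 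Plurisubharmonicity of $u=f(\pi)P$ then forces conditions on $f$. Concretely, near $p$ I would restrict $u$ to complex lines transversal to the foliation, or to discs lying in a horosphere-tangent direction. Since $P<0$ and $u=fP$ is psh, $-f$ times a negative function must be subharmonic on such discs. Comparing with discs through $p$ in the limit, where $|P|\to\infty$ uniformly in $v$ along nontangential approach, $f$ must be superharmonic in a suitable sense on $\mathbb C^n$, bounded below by $0$. Also $f$ is bounded above, since $u\ge$ some bound on a compact set meeting all leaves? Leaves all pass near $p$ and do not all meet a fixed compact set, so boundedness needs a separate argument. Liouville for nonnegative superharmonic functions on $\mathbb C^n=\mathbb R^{2n}$ fails for $n\ge2$, so I would instead aim for genuine plurisuperharmonicity of $f\circ$(holomorphic coordinates). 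Alternatively, I would use both psh of $u$ and the fact that $u$ is $\le 0$ with the sup definition: $u/\sup f\ge P$ and maximality arguments.

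\textbf{Main obstacle.} Step 2 is the hard part. I would first try to use the results from the two previous papers giving an explicit holomorphic-ish model $\Phi$ in which $P_{\Omega,p}\circ\Phi=-\mathrm{Re}$ of a simple function, for instance a Siegel-half-space-type model where leaves are $\{v\}\times\mathbb H$ and $P$ corresponds to $-\frac1{\mathrm{Im}\,w}$-like expressions. I would then show that psh of $f(v)\,P$ on the ball forces $f$ to be constant, as one checks directly on the ball, where the argument can be verified by explicit computation.
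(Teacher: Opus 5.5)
\textbf{Step 1 is fine. Step 2 has a genuine gap.}

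Your Step 1 is the paper's reduction: apply Herglotz on each leaf, normalize the leaves as $\psi_v$ with $\psi_v(1)=p$, $\psi_v'(1)=(1,v)$ and $P_{\Omega,p}\circ\psi_v=-P$, and reduce to showing that $\lambda(v)=-u(\psi_v(0))$ (your $f$) is constant. One repair is needed there. Your justification of $u\le 0$ is not an argument: a single leaf can carry $+cP$, and $u$ is not assumed bounded above. The clean fix is to take, through any $z_0\in\Omega$, a complex line $L$ with $p\notin L$. Then apply the maximum principle to the subharmonic function $u|_{L\cap\Omega}$, whose boundary points all lie in $\partial\Omega\setminus\{p\}$.

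Step 2 contains no proof, and the routes you sketch fail as stated:
\begin{itemize}
\item The model map to the ball (the boundary spherical representation) is only a $C^2$ diffeomorphism, not a biholomorphism. So plurisubharmonicity of $u$ cannot be transported to $\mathbb B^{n+1}$ and ``checked there''. Moreover, on the ball the claim is the theorem itself, not a computation.
\item There is no reason for the leaf projection $\pi$ to be holomorphic or pluriharmonic in general, and your sketch gives none.
\item A superharmonicity property of $f$ would not suffice, as you note yourself.
\item The Perron-family idea is circular. Along the nontangential curve $t\mapsto\psi_v(t)$, the function $u/\sup f$ meets the growth requirement $\liminf\ge 2$ only if $f(v)\ge\sup f$.
\item For merely plurisubharmonic $u$, the function $\lambda$ is only lower semicontinuous, so no differential argument can be run on it directly. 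Your proposal does not address this.
\end{itemize}

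\textbf{The missing idea.} Use plurisubharmonicity only infinitesimally along a single leaf $\psi_{v_0}$.

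For $C^2$ $u$, harmonicity of $u\circ\psi_{v_0}$ gives $\sum_{j,k}\frac{\partial^2u}{\partial z_j\partial\overline z_k}\circ\psi_{v_0}\,(\psi_{v_0}')_j\overline{(\psi_{v_0}')_k}=0$. Positive semidefiniteness of the complex Hessian upgrades this to $\sum_k\frac{\partial^2u}{\partial z_j\partial\overline z_k}\circ\psi_{v_0}\,\overline{(\psi_{v_0}')_k}=0$ for each $j$. That is, $\frac{\partial u}{\partial z}\circ\psi_{v_0}$ is holomorphic on $\Delta$.

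Pair this with the holomorphic variation field $J_v=\frac{\dd}{\dd t}\big|_{t=0}\psi_{v_0+tv}$. This gives a holomorphic $h_v$ with $2\,\mathrm{Re}\,h_v=-(\dd\lambda)_{v_0}(v)P$, where $P$ is the disc Poisson kernel. Hence $h_v=-\frac12(\dd\lambda)_{v_0}(v)\frac{1+\zeta}{1-\zeta}+ic_v$.

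The normalization gives $J_v(\zeta)/(\zeta-1)\to(0,v)$ and $\psi_{v_0}'(1)=(1,v_0)$, and these vectors span $\mathbb C^{n+1}$. Comparing limits then shows $(\zeta-1)^2\frac{\partial u}{\partial z}\circ\psi_{v_0}(\zeta)\to w_0$ and $(\dd\lambda)_{v_0}(v)=\langle(0,v),\overline w_0\rangle$. This is a $\mathbb C$-linear functional of $v$ that is real-valued, hence zero. So what replaces the transversal holomorphy you hoped for is only that the boundary datum $\psi_v'(1)=(1,v)$ depends complex-linearly on $v$.

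For general $u$, run the same argument with a $C^2$ function $\widetilde\lambda\le\lambda$ touching $\lambda$ at $v_0$:
\begin{itemize}
\item In the coordinates $\Phi(v,\zeta)=\psi_v(\zeta)$, the function $\widetilde u=-\widetilde\lambda(v)P(\zeta)$ touches $u$ from above along the whole leaf $\psi_{v_0}(\Delta)$.
\item By the sub-mean-value property, its complex Hessian is positive semidefinite there.
\item The same computation gives $(\dd\widetilde\lambda)_{v_0}=0$.
\item An elementary viscosity-type calculus lemma then forces the lower semicontinuous $\lambda$ to be constant.
\end{itemize}
This requires $v\mapsto\psi_v\in C^1(\overline\Delta)$ to be $C^1$ and $\Phi$ to be a $C^2$ diffeomorphism, which is where the $C^{5,\alpha}$ hypothesis is used.
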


Roughly speaking, this result provides a geometric characterization of the pluricomplex Poisson kernel $P_{\Omega,\, p}$ in terms of the foliation of the domain $\Omega$ by all complex geodesic discs whose closure contains $p$. We refer the reader to \cite{Bracci-Trans09} for other characterizations of the kernel, and also to \cite{Bracci-JEMS10, Bracci-MathAnn05} for its important applications in the theory of holomorphic semigroups and other areas. It turns out that the kernel function $P_{\Omega,\, p}$ bears a strong resemblance to the classical Poisson kernel in real potential theory, much as the pluricomplex Green function $g_{\Omega}(\,\cdot\,,w)$ does to the classical Green function.

As for Question \ref{Q:uniqueness2}, which was first posed by Huang and the author in \cite{Huang-Wang22} and seems to be very interesting from the PDE point
of view, the answer turns out to be negative even in the simplest case of the unit ball $\mathbb B^{n+1}\subset\mathbb C^{n+1}$, in sharp contrast with the uniqueness phenomenon for equation \eqref{eq:Inter-MA}. To illustrate the abundance of solutions to equation \eqref{eq:Bdry-MA} on $\mathbb B^{n+1}$ for every positive integer $n$, we establish the following result.

\begin{theorem}\label{thm:main2}
There is a pairwise distinct family $\{u_{\lambda}\}_{\lambda\in(0,\, 1)}$ of continuous plurisubharmonic functions on $\mathbb B^{n+1}$ with the following
properties:
\begin{enumerate}[label=\textup{(\roman*)}, leftmargin=2.0pc, parsep=4pt]
   \item  $(\ddc u_{\lambda})^{n+1}=0$ on $\mathbb B^{n+1}$ and
       $$
       P_{\mathbb B^{n+1},\, e_1} \leq u_{\lambda} \leq e^{-\lambda} P_{\mathbb B^{n+1},\, e_1};
       $$

    \item Each $u_{\lambda}$ is not a constant multiple of $P_{\mathbb B^{n+1},\, e_1}$,
\end{enumerate}
where $e_{1}\coloneqq (1, 0,\ldots, 0)\in\partial\mathbb B^{n+1}$.
\end{theorem}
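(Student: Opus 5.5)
The plan is to work in Siegel coordinates, where the kernel becomes a simple defining function, and to obtain $u_\lambda$ as a Perron--Bremermann envelope whose obstacle is imposed only on a pluripolar set. That choice forces $(\ddc u_\lambda)^{n+1}=0$ everywhere, while the remaining freedom off the obstacle set is what should break proportionality.

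\emph{Setup.} I pass to the Siegel domain $\mathcal H=\{w=(w_1,w')\in\mathbb C\times\mathbb C^n: \rho(w)\coloneqq {\rm Im}\,w_1-|w'|^2>0\}$ by the Cayley transform $C$, which sends $e_1$ to infinity. Up to a positive normalising constant, $P_{\mathbb B^{n+1},e_1}\circ C^{-1}=-\rho$. The ordering $P\le e^{-\lambda}P$ then reads $-\rho\le -e^{-\lambda}\rho$. The dilations $(w_1,w')\mapsto(t^2w_1,tw')$, the Heisenberg translations and the unitary rotations in $w'$ all fix $\rho$ up to a positive factor. So no $u_\lambda$ can be produced by composing $P$ with automorphisms, and a genuinely different construction is needed. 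This also explains why $n\ge1$ is essential: for $n=0$, positive harmonic functions on a half-plane vanishing on $\mathbb R$ are multiples of ${\rm Im}\,w_1$.

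\emph{Construction.} Set $E=\{z_2=0\}\cap\mathbb B^{n+1}$. This is a complex hypersurface, hence pluripolar, and its closure contains $e_1$. Define
$$
u_\lambda\coloneqq\Big(\sup\big\{v\in{\rm Psh}(\mathbb B^{n+1}): v\le e^{-\lambda}P \ \text{on}\ \mathbb B^{n+1},\ v\le P\ \text{on}\ E\big\}\Big)^*.
$$
\begin{enumerate}[label=\textup{(\alph*)}, leftmargin=2.0pc]
\item \emph{Bounds.} Since $P$ is a competitor, $P\le u_\lambda\le e^{-\lambda}P$. Because $e^{-\lambda}P$ is continuous, the usc regularisation still satisfies the same bounds; as $E$ is pluripolar, $u_\lambda$ equals the envelope off a pluripolar set. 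On $E$ the obstacle $P$ is the restriction of a plurisubharmonic function of the ambient ball; restricting to $E$ and using the sub-mean value property on discs in $E$ should show $u_\lambda|_E=P|_E$.
\item \emph{Maximality.} By the standard Bedford--Taylor balayage argument on small balls disjoint from $E$, $(\ddc u_\lambda)^{n+1}=0$ wherever the upper constraint $e^{-\lambda}P$ is not active. Since $e^{-\lambda}P$ itself is maximal, the contact set with it carries no Monge--Amp\`ere mass either. The measure $(\ddc u_\lambda)^{n+1}$ of the locally bounded function $u_\lambda$ charges no pluripolar set, so it also vanishes on $E$. Hence $(\ddc u_\lambda)^{n+1}=0$ on $\mathbb B^{n+1}$.
\item \emph{Continuity.} The boundary values are continuous ($0$ on $\partial\mathbb B^{n+1}\setminus\{e_1\}$). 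The ball is invariant under the unitary maps fixing $e_1$ and $E$, and the dilation/Heisenberg maps in Siegel coordinates preserve $P$ up to scaling. I would run a Walsh-type argument: translate competitors by automorphisms close to the identity and absorb the error using the two-sided bound $P\le u_\lambda\le e^{-\lambda}P$.
\end{enumerate}

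\emph{Nonproportionality (the main obstacle).} I need $u_\lambda\ne cP$ for every $c>0$. The constraint $u_\lambda=P$ on $E$ forces $c=1$, so it suffices to produce a competitor $v$ with $v>P$ somewhere off $E$.

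I would try $v=\max\big(P,\,P+\delta\,\chi\big)$, written in Siegel coordinates, where $\chi\ge0$ is plurisubharmonic, vanishes on $\{w_2=0\}$, and is chosen so that $P+\delta\chi\le e^{-\lambda}P$. For instance $\chi=|w_2|^2\psi$, cut off so that it is supported where $\rho$ is bounded below, with $\delta$ small depending on $\lambda$. The delicate point is keeping plurisubharmonicity after the cutoff. I would handle it by gluing with $\max$ against $P$ only in a compact region where $\rho\ge\rho_0>0$. There the slack $(1-e^{-\lambda})\rho_0$ absorbs the perturbation.

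Finally, pairwise distinctness in $\lambda$ should follow from comparing $u_\lambda$ far from $E$: as $\lambda$ varies, the envelope is pushed towards $e^{-\lambda}P$ along sequences leaving $E$, which should force different values for different $\lambda$.
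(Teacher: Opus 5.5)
\textbf{There is a fatal gap in step (a): your envelope collapses to $e^{-\lambda}P$.} A pointwise constraint imposed only on the pluripolar set $E$ is invisible to the regularized envelope.

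For every $\varepsilon>0$, set $w_\varepsilon\coloneqq e^{-\lambda}P+\varepsilon\log|z_2|$. This function is plurisubharmonic on $\mathbb B^{n+1}$. It satisfies $w_\varepsilon\le e^{-\lambda}P$ because $|z_2|<1$ there, and it equals $-\infty$ on $E$, so it is an admissible competitor. (If you want competitors to lie above $P$, use $\max(P,w_\varepsilon)$ instead.) Letting $\varepsilon\to0$ shows that the unregularized supremum equals $e^{-\lambda}P$ on $\mathbb B^{n+1}\setminus E$. Since $E$ is nowhere dense and $e^{-\lambda}P$ is continuous, the upper semicontinuous regularization is $e^{-\lambda}P$ everywhere, including on $E$.

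Hence your claim $u_\lambda|_E=P|_E$ is false. The sub-mean value property on discs inside $E$ only controls the values of the unregularized supremum on $E$. It does not control the limits from outside $E$, and those are what determine $u_\lambda$ there. So $u_\lambda=e^{-\lambda}P$ is exactly a constant multiple of the kernel, and (ii) fails. Steps (b) and (c), the competitor $\max(P,P+\delta\chi)$, and the distinctness argument all presuppose $u_\lambda=P$ on $E$, so none of them survives.

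This is the standard negligibility of pluripolar sets for Perron--Bremermann envelopes, and it is built into your design. You chose $E$ pluripolar precisely so that $(\ddc u_\lambda)^{n+1}$ cannot charge it, but the same property erases the constraint. Taking $E$ non-pluripolar would keep the constraint alive, but you would then lose the argument in (b) that kills the mass on $E$. Compare the relative extremal function of a non-pluripolar compact set, whose Monge--Amp\`ere measure lives on that set.

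\textbf{How the paper proceeds.} The paper avoids plurisubharmonic envelopes and writes the solutions down explicitly. In Siegel coordinates the kernel becomes $\rho=|z|^2-{\rm Im}\,\zeta$, and the paper sets $v_\lambda(z,\zeta)=f_\lambda(\log|z|^2,{\rm Im}\,\zeta)$, where
$f_\lambda(x,y)=\sup_{t\in\mathbb R}c_\lambda(t)\bigl(e^t(1+x-t)-y\bigr)$ and $c_\lambda(t)=\exp\bigl(-\lambda e^t/(1+e^t)\bigr)$.
\begin{itemize}
\item This is a real supremum of affine functions with a unique nondegenerate maximizer. Consequently $f_\lambda$ is smooth, convex, increasing in $x$, and has rank-one real Hessian.
\item The explicit formula relating the complex Hessian of $v_\lambda$ to the real Hessian of $f_\lambda$ gives plurisubharmonicity and $(\ddc v_\lambda)^{n+1}=0$ off $\{z=0\}$. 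The vanishing extends across $\{z=0\}$ because this set is pluripolar and $v_\lambda$ is locally bounded.
\item The bounds $e^{-\lambda}\le c_\lambda\le 1$ give $\rho\le v_\lambda\le e^{-\lambda}\rho$.
\end{itemize}
Notably, $v_\lambda$ does coincide with the kernel on the pluripolar line $\{z=0\}$, which is exactly the feature you were after. There, however, this equality is forced by continuity of an explicit formula, namely $f_\lambda(x,y)\to-y$ as $x\to-\infty$, not by an envelope constraint. Combined with $v_\lambda/\rho\to e^{-\lambda}$ as ${\rm Im}\,\zeta\to+\infty$ for $z\ne0$, it yields both nonproportionality and pairwise distinctness.
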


The construction of such a family $\{u_{\lambda}\}_{\lambda\in(0,\, 1)}$ is motivated by the well-known relationship between convex functions and
plurisubharmonic functions (see, e.g., \cite[Chapter I]{Demaillybook}),  as well as by the optimization theory.

Note that Theorem \ref{thm:main2} also shows that the answer to \cite[Question 7.6]{Bracci-Trans09} is {\it negative}, in general.

The paper is organized as follows. In Section~\ref{sect:Pf-MainThm1} we prove Theorem~\ref{thm:main1}. To highlight the main idea before
introducing the additional technical details, we first treat the
$C^2$-smooth case and then turn to the general case.
Section~\ref{sect:Pf-MainThm2} is devoted to the construction of a family of continuous plurisubharmonic functions on  $\mathbb B^{n+1}$ with the desired properties in Theorem~\ref{thm:main2}.

\section{Proof of Theorem~\ref{thm:main1}}\label{sect:Pf-MainThm1}
We first prove the theorem in the $C^2$-smooth case to highlight the main idea and, for the interested reader, to facilitate comparison with the original proof in \cite{Bracci-Trans09}.  Our proof in this case relies crucially on the recent results obtained in \cite{Huang-Wang22, Wang24} and avoids using the
Jacobi-field representation formula established in \cite{S-Trapani_JGA02}, which played a key role in the original proof. We then treat the general case by introducing the additional technique required.

\subsection{The $C^2$-smooth case}
Note that $P_{\Omega,\,p}\in C^2(\Omega)$ when $\partial\Omega$ is $C^{5,\,\alpha}$-smooth, in view of \cite[Theorem D]{Wang24}.

\begin{proof}[Proof of Theorem \ref{thm:main1} in the case when $u$ is $C^2$-smooth]
Arguing as in \cite{Bracci-Trans09}, we know that either $u\equiv0$, in which case the conclusion holds with $c=0$, or $u<0$ on $\Omega$. We henceforth assume that the latter alternative holds. Then by assumption, for each complex geodesic $\varphi$ of $\Omega$ with $\varphi(1)=p$, $u\circ\varphi$ is a negative harmonic function on the unit disc $\Delta\subset\mathbb C$ with boundary value zero on $\partial\Delta \!\setminus\! \{1\}$. Hence by the classical Herglotz representation theorem, $u\circ\varphi$ coincides, up to a negative constant multiple, with the Poisson kernel $P$ on $\Delta$ (with singularity at $1$):
  $$
  P(\zeta)=\frac{1-|\zeta|^2}{|1-\zeta|^2}.
  $$
The same property holds also for $P_{\Omega,\, p}$. Consequently, the quotient $u/P_{\Omega,\, p}$ is constant on each complex geodesic disc of $\Omega$ whose closure contains $p$. It remains to prove that this constant is actually independent of these geodesic discs.

To prove this we argue as follows. Up to a unitary transformation of $\mathbb C^{n+1}$, we may assume that the unit outward normal to $\partial\Omega$ at $p$ is $(1, 0,\ldots, 0)$. Then according to
\cite[Theorem 1.1]{Huang-Wang22}, for every $v\in\mathbb C^n$ there exists a unique complex geodesic $\varphi_v$ of $\Omega$ such that $\varphi_v(1)=p$, $\varphi_v'(1)=(1,\, v)/(1+|v|^2)$ and
\[
 \left.\frac{\dd}{\dd\theta}\right|_{\theta=0} \bigl|\varphi^{\ast}_v(e^{i\theta})\bigr|=0,
\]
where $\varphi^{\ast}_v$ is the dual mapping of $\varphi_{v}$. Define $\psi_v\coloneqq \varphi_v\circ\sigma_v$, where $\sigma_v$ is the unique element of ${\rm Aut}(\Delta)$ satisfying
\[
 \frac{1+\sigma_v(\zeta)}{1-\sigma_v(\zeta)}=\frac{1}{1+|v|^2}\frac{1+\zeta}{1-\zeta}, \quad \zeta\in\Delta.
\]
In this way, each $\psi_v$ satisfies the normalization:
\[
 \psi_v(1)=p,\qquad \psi_v'(1)=(1,\, v)\qquad \text{and} \qquad P_{\Omega,\, p}\circ\psi_v=-P.
\]
Consequently there exists a function $\lambda\!:\mathbb C^n\to(0,\, \infty)$ such that
\[
 u\circ\psi_v=-\lambda(v)P \quad {\rm on }\ \, \Delta
\]
for all $v\in\mathbb C^n$, and the problem reduces to showing that $\lambda$ is constant.

By \cite[Theorem A]{Wang24}, the mapping
\[
 \mathbb C^n\ni v\mapsto \psi_v\in \mathcal O(\Delta,\, \mathbb C^{n+1})\cap C^1(\overline\Delta,\,\mathbb C^{n+1})
\]
is $C^1$-smooth. Thus $\lambda(v)=-u\circ\psi_v(0)\in C^1(\mathbb C^n)$. Now fix an arbitrary point $v_0\in\mathbb C^n$. To prove that $(\dd\lambda)_{v_0}\equiv 0$, consider the variation field of $\{\psi_v\}_{v\in\mathbb C^n}$ along the complex geodesic $\psi_{v_0}$:
\[
 J_v \coloneqq \left.\frac{\dd}{\dd t}\right|_{t=0}\psi_{v_0+tv}\in\mathcal O(\Delta,\,\mathbb C^{n+1})\cap C^1(\overline\Delta, \,\mathbb C^{n+1}),
\]
which satisfies
\[
 J_v(1)=0  \qquad \text{and} \qquad  J_v'(1)=\left.\frac{\dd}{\dd t}\right|_{t=0}\psi_{v_0+tv}'(1)=(0,\, v)
\]
for all $v\in\mathbb C^n$, in view of the normalization of $\{\psi_v\}_{v\in\mathbb C^n}$. Hence
\begin{equation}\label{eq:asy-Jacobi}
 \lim_{\zeta\to1}\frac{J_v(\zeta)}{\zeta-1}=(0,\, v), \quad v\in\mathbb C^n.
\end{equation}
To proceed further, note that the mapping
\[
 \frac{\partial u}{\partial z}\circ\psi_{v_0} \coloneqq \left(\frac{\partial u}{\partial z_1},\ldots,
 \frac{\partial u}{\partial z_{n+1}}\right)\circ\psi_{v_0}
\]
is holomorphic on $\Delta$. Indeed, the harmonicity of $u\circ\psi_{v_0}$ implies
\[
 \sum_{j,\, k=1}^{n+1}
 \frac{\partial^2u}{\partial z_j\partial\overline z_k}\circ\psi_{v_0} (\psi_{v_0}')_j\, \overline{(\psi_{v_0}')}_k
 =\frac{\partial^2(u\circ\psi_{v_0})}{\partial\zeta\,\partial\overline\zeta}=0 \quad {\rm on }\ \, \Delta,
\]
which, together with the Cauchy--Schwarz inequality applied to the complex Hessian of $u$, in turn forces
\[
 \sum_{k=1}^{n+1}
 \frac{\partial^2u}{\partial z_j\partial\overline z_k}\circ\psi_{v_0} \overline{(\psi_{v_0}')}_k=0 \quad {\rm on }\ \, \Delta
\]
for $j=1,\ldots,n+1$. By the chain rule, this means exactly that each component $\frac{\partial u}{\partial z_j}\circ\psi_{v_0}$ is holomorphic on $\Delta$, as desired.

Now consider the holomorphic function
\[
 h_v:=\left\langle J_v, \, \overline{\frac{\partial u}{\partial z}\circ\psi_{v_0}}\right\rangle
 =\sum_{j=1}^{n+1} (J_v)_j\,\frac{\partial u}{\partial z_j}\circ\psi_{v_0}.
\]
Differentiating the identity $u\circ\psi_{v_0+tv}=-\lambda(v_0+tv)P$ with respect to $t$ and evaluating at $t=0$ yield
\[
 2{\rm Re}\,h_v=-(\dd\lambda)_{v_0}(v)P.
\]
Then there exists $c_v\in\mathbb R$ such that
\[
 h_v(\zeta)=-\frac{(\dd\lambda)_{v_0}(v)}{2}\frac{1+\zeta}{1-\zeta}+ic_v, \quad \zeta\in\Delta.
\]
In particular,
\begin{equation}\label{eq:asy-pairing}
 \lim_{\zeta\to 1} \left\langle \frac{J_v(\zeta)}{\zeta-1}, \, \overline{(\zeta-1)^2\frac{\partial u}{\partial z}\circ\psi_{v_0}(\zeta)} \right\rangle
 =\lim_{\zeta\to1}(\zeta-1)h_v(\zeta)=(\dd\lambda)_{v_0}(v), \quad v\in\mathbb C^n.
\end{equation}
On the other hand, differentiating the identity $u\circ\psi_{v_0}=-\lambda(v_0)P$ on $\Delta$ yields
\[
 \left\langle
 \psi_{v_0}'(\zeta), \, \overline{(\zeta-1)^2\frac{\partial u}{\partial z}\circ\psi_{v_0}(\zeta)}
 \right\rangle
 =-\lambda(v_0)
\]
for all $\zeta\in\Delta$. Combining these two equalities with \eqref{eq:asy-Jacobi} and the fact that $\psi_{v_0}'(1)=(1,\, v_0)$, we conclude that $(\zeta-1)^2\frac{\partial u}{\partial z}\circ\psi_{v_0}(\zeta)$ converges to some $w_0\in\mathbb C^{n+1}$ as $\zeta\to1$. Then it follows from \eqref{eq:asy-Jacobi} and \eqref{eq:asy-pairing} that
\[
 (\dd\lambda)_{v_0}(v)=\big\langle(0,\,v), \, \overline{w}_0\big\rangle, \quad v\in\mathbb C^n.
\]
Since $(\dd\lambda)_{v_0}$ takes values in $\mathbb R$, this identity implies $(\dd\lambda)_{v_0}\equiv 0$, as desired. The proof is complete.
\end{proof}

\subsection{The general case}
We borrow some basic idea from the viscosity technique of solving degenerate complex Monge--Amp\`{e}re equations (see \cite{EGZ-CPAM11, GZ-MAeqnbook}). We first prove
the following simple fact.

\begin{lemma}\label{lem:viscosity}
Let $\Omega\subset\mathbb{C}^n$ be a neighborhood of the origin $0$ and let $u\in{\rm Psh}(\Omega)$. If $v\in C^2(\Omega)$ touches $u$ from above at $0$, then its complex Hessian at $0$ is necessarily positive semidefinite.
\end{lemma}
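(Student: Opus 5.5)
The plan is to reduce to a one-variable statement along complex lines and use the sub-mean value property there. ``Touches from above at $0$'' means $v\ge u$ near $0$ and $v(0)=u(0)$. In particular $u(0)>-\infty$, and we may assume this whenever it is part of the convention.

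Fix a vector $\xi\in\mathbb C^n$ and set $\phi(\zeta)\coloneqq v(\zeta\xi)$ and $s(\zeta)\coloneqq u(\zeta\xi)$ for $\zeta$ in a small disc $\{|\zeta|<r\}$. Then:
\begin{itemize}
\item $s$ is subharmonic, being the restriction of a plurisubharmonic function to a complex line; it may be identically $-\infty$ only if $u(0)=-\infty$, which is excluded.
\item $\phi\in C^2$ and $\phi\ge s$ with $\phi(0)=s(0)$.
\item By the chain rule,
\[
\frac{\partial^2\phi}{\partial\zeta\partial\overline\zeta}(0)=\sum_{j,k}\frac{\partial^2 v}{\partial z_j\partial\overline z_k}(0)\,\xi_j\overline{\xi}_k .
\]
\end{itemize}
So it suffices to show that $\Delta\phi(0)\ge0$ for a $C^2$ function $\phi$ touching a subharmonic function $s$ from above at $0$.

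For the one-variable claim, use the sub-mean value property of $s$ on small circles:
\[
\phi(0)=s(0)\le \frac1{2\pi}\int_0^{2\pi}s(\rho e^{i\theta})\,d\theta\le \frac1{2\pi}\int_0^{2\pi}\phi(\rho e^{i\theta})\,d\theta .
\]
Taylor-expand $\phi$ to second order at $0$. The linear terms and the $\zeta^2$, $\overline\zeta^2$ terms integrate to zero over the circle, which gives
\[
\frac1{2\pi}\int_0^{2\pi}\phi(\rho e^{i\theta})\,d\theta=\phi(0)+\frac{\rho^2}{4}\Delta\phi(0)+o(\rho^2).
\]
Hence $0\le \frac{\rho^2}{4}\Delta\phi(0)+o(\rho^2)$. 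Dividing by $\rho^2$ and letting $\rho\to0$ yields $\Delta\phi(0)=4\,\partial_\zeta\partial_{\overline\zeta}\phi(0)\ge0$. Since $\xi$ is arbitrary, the complex Hessian of $v$ at $0$ is positive semidefinite.

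There is no real obstacle here. The only points needing care are the $C^2$ Taylor remainder, which is uniform on small circles because $v\in C^2$, and the fact that the touching inequality is only local; both are handled by taking $\rho$ small.
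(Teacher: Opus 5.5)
Your proposal is correct and follows essentially the same route as the paper: restrict to the complex lines $\zeta\mapsto u(\zeta w)$, compare circle averages of $u$ and $v$ using the sub-mean value property, and identify $\partial_\zeta\partial_{\overline\zeta}$ of $v(\zeta w)$ at $0$ as the limit of $r^{-2}$ times the circle average minus the value at $0$. Your extra remarks on the finiteness of $u(0)$ and on the uniformity of the Taylor remainder are details the paper leaves implicit.
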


\begin{proof}
By assumption, for every $w\in\mathbb{C}^n \!\setminus\! \{0\}$ the function $\zeta\mapsto u(\zeta w)$ is subharmonic near $0$ and is touched from above
at $0$ by the $C^2$ function $\zeta\mapsto v(\zeta w)$. Together with the submean value property, this implies
\[
 v(0)=u(0) \le \frac{1}{2\pi}\int_0^{2\pi}u(re^{i\theta}w)\,\dd\theta
 \le \frac{1}{2\pi}\int_0^{2\pi}v(re^{i\theta}w)\,\dd\theta
\]
for all sufficiently small $r>0$. Consequently,
\[
\sum_{j,\, k=1}^{n} \frac{\partial^2 v}{\partial z_j\partial\overline{z}_k}(0) w_j\overline{w}_k
=\left.\bigg(\frac{\partial^2}{\partial\zeta\partial\overline{\zeta}} v(\zeta w)\bigg)\right|_{\zeta=0}
=\lim_{r\to 0}\frac{1}{r^2}\left(\frac{1}{2\pi}\int_0^{2\pi}v(re^{i\theta}w)\,\dd\theta-v(0)\right)
\ge 0
\]
for all $w\in\mathbb{C}^n \!\setminus\!\{0\}$. The desired conclusion follows.
\end{proof}

We also need the following calculus lemma.

\begin{lemma}\label{lem:calculus}
Let $D$ be a domain in $\mathbb{R}^n$ and $f\!:D\to\mathbb{R}$ be an upper semicontinuous function with the property that for every $x\in D$, each $C^2$ function on a neighborhood of $x$ that touches $f$ from above at $x$ has vanishing differential at $x$. Then $f$ is constant.
\end{lemma}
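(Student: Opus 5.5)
The plan is to show that every point is a maximum of $f$ on every ball around it that is compactly contained in $D$. This makes $f$ locally constant, and connectedness of $D$ then gives constancy. The only tool is the hypothesis applied to a radial test function built with a barrier that blows up at the boundary of the ball. This barrier forces the relevant maximum into the interior.

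\emph{Key step.} Fix $x_0\in D$ and $r>0$ with $\overline{B(x_0,r)}\subset D$, and let $y\in B(x_0,r)$. I claim $f(x_0)\ge f(y)$. If $f(y)=-\infty$ there is nothing to prove, so assume $f(y)>-\infty$.
\begin{enumerate}[label=\textup{(\alph*)}]
\item For $\varepsilon>0$ let $h_\varepsilon(t)=\varepsilon t/(r^2-t)$ on $[0,r^2)$. It is smooth, satisfies $h_\varepsilon(0)=0$, has $h_\varepsilon'>0$, and $h_\varepsilon(t)\to+\infty$ as $t\to r^2$.
\item Put $g(x)=f(x)-h_\varepsilon(|x-x_0|^2)$ on $B(x_0,r)$. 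Since $f$ is upper semicontinuous, it is bounded above on the compact set $\overline{B(x_0,r)}$. Hence $g\to-\infty$ at $\partial B(x_0,r)$, and $g$ is not identically $-\infty$ because $g(y)>-\infty$.
\item By upper semicontinuity, $g$ attains its maximum at some interior point $x_1\in B(x_0,r)$.
\item The $C^2$ function $v(x)=g(x_1)+h_\varepsilon(|x-x_0|^2)$ satisfies $v\ge f$ near $x_1$ and $v(x_1)=f(x_1)$, so $v$ touches $f$ from above at $x_1$. By hypothesis,
\[
 0=(\dd v)_{x_1}=2h_\varepsilon'(|x_1-x_0|^2)\,\langle x_1-x_0,\,\cdot\,\rangle .
\]
Since $h_\varepsilon'>0$, this forces $x_1=x_0$.
\item Therefore $f(x_0)=g(x_0)\ge g(y)=f(y)-h_\varepsilon(|y-x_0|^2)$. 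Letting $\varepsilon\to0$ gives $f(x_0)\ge f(y)$.
\end{enumerate}

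\emph{Local constancy.} Given $x_0\in D$, choose $r>0$ with $\overline{B(x_0,2r)}\subset D$, and take any $y\in B(x_0,r)$.
\begin{itemize}
\item Applying the key step at the centre $x_0$ with radius $r$ gives $f(x_0)\ge f(y)$.
\item Since $\overline{B(y,r)}\subset\overline{B(x_0,2r)}\subset D$ and $x_0\in B(y,r)$, applying the key step at the centre $y$ with radius $r$ gives $f(y)\ge f(x_0)$.
\end{itemize}
Hence $f\equiv f(x_0)$ on $B(x_0,r)$. A locally constant function on a connected set is constant, so $f$ is constant on $D$.

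\emph{Main obstacle.} The delicate point is guaranteeing that the maximizer of $g$ lies in the interior of the ball, since the hypothesis gives information only at interior touching points. The barrier $h_\varepsilon$, which blows up at the boundary while being strictly increasing with $h_\varepsilon(0)=0$, handles this. It confines the maximum to the interior, makes the only admissible critical point the centre $x_0$, and costs nothing in the limit $\varepsilon\to0$.
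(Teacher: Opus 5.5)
Your proposal is correct and takes essentially the same approach as the paper. Both use the barrier $1/(r^2-|x-x_0|^2)$ (your $h_\varepsilon$ differs only by an affine normalization) to force the maximizer of $f$ minus the barrier to be the centre $x_0$. The only differences are minor: you argue directly and let $\varepsilon\to0$ instead of arguing by contradiction with a fixed small $\varepsilon_0$, and you write out the symmetric two-ball step for local constancy that the paper leaves implicit.
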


\begin{proof}
It suffices to show that for every $x\in D$ and $R>0$ with $B(x, R)\subset\subset D$,
\[
\sup_{B(x,\, R)} f=f(x).
\]
Once this is established, it follows that $f$ is locally constant on $D$, and hence constant by the connectedness of $D$.

We argue by contradiction and suppose that there exists $x_0\in D$ and $R_0>0$ with $B(x_0, R_0)\subset\subset D$ such that
\[
\sup_{B(x_0,\, R_0)} f>f(x_0).
\]
Choose $y_0\in B(x_0, R_0)$ such that $f(y_0)>f(x_0)$, and consider the auxiliary function
\[
g(x)\coloneqq \frac{1}{R_0^2-|x-x_0|^2}, \quad x\in B(x_0,  R_0).
\]
Then there exists a sufficiently small $\varepsilon_0>0$ such that
$$f(y_0)-\varepsilon_0 g(y_0)>f(x_0).$$ Since $f$ is bounded above on $B(x_0,R_0)$ and $g(x)$ tends to $+\infty$ as
$x\to\partial B(x_0,R_0)$, it follows that $f-\varepsilon_0 g$ attains its maximum on $B(x_0, R_0)$ at some point $\widetilde{x}_0\ne x_0$. Consequently
$\varepsilon_0\bigl(g-g(\widetilde{x}_0)\bigr)+f(\widetilde{x}_0)$ touches $f$ from above at $\widetilde{x}_0$, hence has vanishing
differential there by assumption. But this contradicts the
fact that $\widetilde{x}_0\ne x_0$, and the claim follows.
\end{proof}

With Lemmas \ref{lem:viscosity} and \ref{lem:calculus} at disposal, we now indicate how to adapt the argument for the $C^2$-smooth case in the preceding subsection to the general case.

\begin{proof}[Proof of Theorem \ref{thm:main1} in the general case]
We retain the notation introduced in the preceding proof and consider the mapping
\[
\Phi\!:\mathbb{C}^n\times\Delta\rightarrow\Omega, \quad (v,\, \zeta) \mapsto \psi_v(\zeta).
\]
Then $\Phi$ is a $C^2$ diffeomorphism. To see this, let $\Psi=\Psi_p$ denote the boundary spherical representation for $\Omega$ introduced in \cite{Huang-Wang22}, which is a  $C^2$ diffeomorphism between $\Omega$ and $\mathbb{B}^{n+1}$, in view of \cite[Theorem C]{Wang24}; more explicitly, in the present context $\Psi$ is determined by the requirement
\[
\Psi\circ\psi_v=\eta_v\circ\sigma_v, \quad v\in\mathbb{C}^n,
\]
where
\[
\eta_v(\zeta)=e_1+(\zeta-1)\frac{(1,\, v)}{1+|v|^2}.
\]
Then an explicit formula shows that
\[
\Psi\circ\Phi(v,\, \zeta)=\eta_v\circ\sigma_v(\zeta)
\]
is a $C^\infty$ diffeomorphism between $\mathbb{C}^n\times\Delta$ and $\mathbb{B}^{n+1}$, hence $\Phi=\Psi^{-1}\circ(\Psi\circ\Phi)$ is a $C^2$ diffeomorphism, as desired.
On the other hand, note that the function $\lambda\!:\mathbb{C}^n\to (0,\, \infty)$ determined by the relationship that $u\circ\psi_v=-\lambda(v)P$ on $\Delta$ is merely lower semicontinuous. Indeed, since $u$ is upper semicontinuous and the mapping $\mathbb{C}^n\ni v\mapsto\psi_v(0)\in \Omega$ is continuous, it follows that $\lambda(v)=-u\circ\psi_v(0)$ is lower semicontinuous.

To apply Lemma \ref{lem:calculus} to conclude that $\lambda$ is constant, we fix an arbitrary point $v_0\in\mathbb{C}^n$ and suppose $\widetilde\lambda$ is a $C^2$ function on a neighborhood $V_0$ of $v_0$ that touches $\lambda$ from below at $v_0$. We then need to show $(\dd\widetilde\lambda)_{v_0}\equiv 0$. To this end, define
\[
\widetilde u \coloneqq -\big((\widetilde\lambda\circ\pi_1)(P\circ\pi_2)\big)\circ(\left.\Phi\right|_{V_0\times\Delta})^{-1},
\]
where $\pi_1$ and $\pi_2$ denote the projections of $\mathbb{C}^n\times\Delta$ to $\mathbb{C}^n$ and $\Delta$, respectively. Then $\widetilde u$ is a $C^2$ function on $\Phi(V_0\times\Delta)\subset\Omega$ that
\begin{itemize}[leftmargin=1.5pc, parsep=4pt]
  \item satisfies $\widetilde u\circ\psi_v=-\widetilde\lambda(v)P$ on $\Delta$ for all $v\in V_0$, and
  \item touches $u$ from above on the whole geodesic disc $\psi_{v_0}(\Delta)=\Phi(\{v_0\}\times\Delta)$.
\end{itemize}
Thus by Lemma \ref{lem:viscosity}, the complex Hessian of $\widetilde u$ at each point of $\psi_{v_0}(\Delta)$ is positive semidefinite. Now the same argument as in the latter half of the third paragraph of the preceding proof shows that all the functions
\[
\frac{\partial\widetilde u}{\partial z_j}\circ\psi_{v_0}, \quad j=1,\ldots,n+1,
\]
are holomorphic on $\Delta$. This in turn enables us to define a new holomorphic function $\widetilde h_v$ as before with $\widetilde u$ in place of $u$. In this way, the same argument as before using this function and the identity $\widetilde u\circ\psi_v=-\widetilde\lambda(v)P$, with $v\in V_0$, eventually yields $(\dd\widetilde\lambda)_{v_0}\equiv0$. Then Lemma \ref{lem:calculus} applies, thereby completing the proof.
\end{proof}

\begin{remark}
As essentially pointed out in \cite[pp. 1001--1002]{Bracci-Trans09},  Theorem \ref{thm:main1} and the proof of  \cite[Proposition 7.4]{Bracci-Trans09} imply that Proposition 7.5 there remains valid without the {\it a priori} $C^2$-regularity assumption on $u$, even for bounded strongly linearly convex domains with $C^{5,\,\alpha}$-boundary, where $0<\alpha<1$.
\end{remark}

\section{Proof of Theorem~\ref{thm:main2}}\label{sect:Pf-MainThm2}
Let $\mathbb B^{n+1}$ denote the unit ball in $\mathbb C^{n+1}$. Recall that its pluricomplex Poisson kernel with singularity at $e_{1}\coloneqq (1, 0,\ldots, 0)\in\partial\mathbb B^{n+1}$ is given by
  $$
  P_{\mathbb B^{n+1},\, e_1}(z)=-\frac{1-|z|^2}{|1-z_1|^2};
  $$
see, e.g., \cite[Section 1 and Theorem 5.1]{Bracci-Trans09}. To construct a family of plurisubharmonic functions on  $\mathbb B^{n+1}$ with the desired properties in Theorem~\ref{thm:main2}, we work on the Siegel upper
half-space
    $$
     \mathcal{U}^{n+1}=\Bigl\{(z,\, \zeta)\in\mathbb C^{n}\times\mathbb C\!: {\rm Im}\, \zeta-|z|^2>0 \Bigr\},
    $$
which is biholomorphic to $\mathbb B^{n+1}$ via the Cayley transform
   $$
   C(z,\, \zeta) =\bigg(\frac{2i z}{\zeta+i},\, \frac{\zeta-i}{\zeta+i}\bigg),     \quad (z,\,\zeta)\in \mathcal{U}^{n+1}.
   $$
An easy calculation shows
    $$
    P_{\mathbb B^{n+1},\, e_1}\circ C(z,\, \zeta)=\rho (z,\, \zeta) \coloneqq |z|^{2}-{\rm Im}\,\zeta.
    $$
The problem is then reduced to constructing a pairwise distinct family of continuous plurisubharmonic functions $\{v_{\lambda}\}_{\lambda\in(0,\, 1)}$
on $\mathcal{U}^{n+1}$ that solve the homogeneous complex Monge--Amp\`ere equation $(\ddc v)^{n+1}=0$, and are comparable with, but  not proportional to $\rho$.

We start with the following simple lemma, which is presumably well-known (compare with \cite[Chapter I]{Demaillybook}). We include a complete proof for the
reader's convenience.

\begin{lemma}\label{lem:convex-psh}
Let $f$ be a real-valued $C^{2}$ function on
   \[
    D=\big\{(x,\, y)\in \mathbb R^{2}\!: y>e^{x}\big\},
   \]
and  $v\!:\mathcal{U}^{n+1} \!\setminus\! (\{0\}\times\mathbb C)\rightarrow \mathbb R$ be the function given by
   \[
    v(z,\, \zeta)=f\big(2\log|z|,\, {\rm Im}\, \zeta \big).
   \]
If $f$ is convex and $\frac{\partial f}{\partial x}\geq 0$, then $v$ is plurisubharmonic, and the determinant of its complex Hessian is
related to that of the real Hessian of $f$ by the following formula:
\begin{equation*}\label{eq:complex-real-hessian}
    \det {\rm Hess}_{\mathbb C} v(z,\, \zeta)
      =\frac{1}{4|z|^2} \bigg(\frac{1}{|z|^2} \frac{\partial f}{\partial x}  \bigl(2\log|z|,\, {\rm Im}\,\zeta\bigr) \bigg)^{n-1}
         \det {\rm Hess}_{\mathbb R} f \bigl(2\log|z|,\, {\rm Im}\,\zeta\bigr).
\end{equation*}
\end{lemma}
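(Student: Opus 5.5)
A direct computation of the complex Hessian of $v$ should do it, after a reduction that diagonalizes it. Write $\zeta=s+it$, $x=2\log|z|$, $y=t={\rm Im}\,\zeta$. Note first that $(2\log|z|,{\rm Im}\,\zeta)\in D$ exactly when $|z|^2<{\rm Im}\,\zeta$ and $z\neq0$, so $v$ is well defined and $C^2$ on $\mathcal U^{n+1}\setminus(\{0\}\times\mathbb C)$. I then compute the first derivatives by the chain rule:
\[
\frac{\partial v}{\partial z_j}=f_x\,\frac{\overline z_j}{|z|^2},\qquad \frac{\partial v}{\partial \zeta}=\frac{f_y}{2i},
\]
using $\partial(2\log|z|)/\partial z_j=\overline z_j/|z|^2$ and $\partial\,{\rm Im}\,\zeta/\partial\zeta=1/(2i)$.

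Differentiating once more gives the complex Hessian. With $a=\overline z/|z|^2\in\mathbb C^n$ and $b=1/(2i)$, the gradient of the argument map $(x,y)$ in the $(z,\zeta)$ variables is $(a,0)$ for $x$ and $(0,b)$ for $y$. Hence
\[
{\rm Hess}_{\mathbb C}v=f_{xx}\,\overline{A}A^{T}\text{-type terms}+\ldots+f_x\,\frac{1}{|z|^2}\Big(I_n-\frac{z\,\overline z^{\,T}}{|z|^2}\Big)\oplus 0 .
\]
More precisely, $\partial^2 x/\partial z_j\partial\overline z_k=\partial_{\overline z_k}(\overline z_j/|z|^2)=\delta_{jk}/|z|^2-\overline z_j z_k/|z|^4$, and ${\rm Im}\,\zeta$ is pluriharmonic. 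So the Hessian is $G^{*}\,{\rm Hess}_{\mathbb R}f\,G+f_x\,Q$, where $G$ is the $2\times(n+1)$ matrix with rows $(a,0)$ and $(0,b)$, and $Q$ is $|z|^{-2}$ times the orthogonal projection onto $z^\perp$ in the $\mathbb C^n$ block.

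To evaluate, I rotate unitarily in $z$ so that $z=(r,0,\dots,0)$; $v$ is invariant under such rotations. The Hessian then splits orthogonally. On the $(n-1)$-dimensional block of directions $z_2,\dots,z_n$, which is $z^\perp$, it equals $f_x/|z|^2\,I_{n-1}$. On the $(z_1,\zeta)$ block it equals $\overline{G_0}^{T}{\rm Hess}_{\mathbb R}f\,G_0$ with $G_0={\rm diag}(1/r,\,1/(2i))$ up to conjugation conventions. That block's determinant is $|1/r|^2|1/(2i)|^2\det{\rm Hess}_{\mathbb R}f=\frac{1}{4|z|^2}\det{\rm Hess}_{\mathbb R}f$. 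Multiplying the two blocks gives the stated formula.

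Plurisubharmonicity follows from this block form. The $(z_1,\zeta)$ block is congruent to ${\rm Hess}_{\mathbb R}f\ge0$, because $f$ is convex. The other block is $f_x|z|^{-2}I\ge0$, because $f_x\ge0$. So ${\rm Hess}_{\mathbb C}v\ge0$, and $v$, being $C^2$, is plurisubharmonic.

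The main obstacle is bookkeeping: I must confirm that the mixed $z_1$–$\zeta$ entries and the conjugation conventions produce exactly the congruence ${\rm diag}(1/r,\pm i/2)^{*}\,{\rm Hess}_{\mathbb R}f\,{\rm diag}(1/r,\pm i/2)$. In particular, the entries $\partial^2/\partial z_1\partial\overline\zeta$ carry a factor $\tfrac{1}{r}\cdot\overline{(1/2i)}$, and I need to check that the $f_x$ correction term vanishes on the $z_1$ direction, which it does since $Q$ kills $z$.
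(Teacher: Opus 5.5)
Your proposal is correct and is essentially the paper's proof. You reduce by unitary invariance to $z=re_1$, where the complex Hessian splits into the block $\frac{1}{r^2}f_x I_{n-1}$ and a $(z_1,\zeta)$-block with entries $\frac{1}{r^2}f_{xx}$, $\pm\frac{i}{2r}f_{xy}$, $\frac14 f_{yy}$; writing that block as a Hermitian congruence of ${\rm Hess}_{\mathbb R}f$ by ${\rm diag}(1/r,\,\pm i/2)$ is a tidy repackaging of the paper's Laplace expansion, and your conjugation bookkeeping checks out (e.g.\ $\partial^2 v/\partial z_1\partial\overline\zeta=\frac{i}{2r}f_{xy}$ at $z=re_1$).
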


\begin{proof}
By unitary invariance, it suffices to verify the desired conclusion at points of the form  $(re_{1},\, \zeta)\in \mathcal{U}^{n+1}$, where $e_{1}\coloneqq (1, 0,\ldots, 0)\in\mathbb C^{n}$ and $r>0$. At each such point, the complex Hessian of $v$ is equal to
\begin{equation*}\renewcommand{\arraystretch}{2.0}
    \begin{pmatrix}
      \dfrac{1}{r^{2}}f_{xx}  & 0 & \dfrac{i}{2r}f_{xy} \\
      0 & \dfrac{1}{r^{2}}f_x I_{n-1} & 0 \\
      -\dfrac{i}{2r}f_{xy} & 0 & \dfrac14 f_{yy}
    \end{pmatrix}(2\log r,\, {\rm Im}\,\zeta),
\end{equation*}
where $I_{n-1}$ denotes the $(n-1)\times (n-1)$ identity matrix, and $f_x$, $f_{xy}$ denote the partial derivatives of $f$ with respect to  $x$ and $xy$, respectively, and so on.
The convexity of $f$ and the condition $f_x\geq 0$ imply that the preceding
matrix is positive semidefinite. Evaluating its determinant via a Laplace expansion along the first row gives
\begin{align*}
\det{\rm Hess}_{\mathbb C} v(re_{1},\, \zeta)
  &=\left(\frac{1}{r^{2}}f_x(2\log r,\, {\rm Im}\,\zeta)\right)^{n-1} \frac{1}{4r^{2}}\bigl(f_{xx}f_{yy}-f_{xy}^{2}\bigr)(2\log r,\, {\rm Im}\,\zeta)\\
  &=\frac{1}{4r^{2}}\left(\frac{1}{r^{2}}f_x(2\log r,\, {\rm Im}\,\zeta)\right)^{n-1}
    \det{\rm Hess}_{\mathbb R} f(2\log r,\, {\rm Im}\,\zeta).
\end{align*}
This completes the proof.
\end{proof}

To motivate the following construction we observe that the convex function
$e^{x}-y$, restricted to $D$ (given as in Lemma \ref{lem:convex-psh}), can be expressed as the envelope of a
one-parameter family of affine functions:
  $$
  e^x-y=\sup_{t\in\mathbb R} c(t)\bigl(e^{t}(1+x-t)-y\bigr),  \quad {\rm with }\  \,  c(t)\equiv 1.
  $$
We now replace the constant function $c(t)\equiv 1$ by a general positive,
bounded $C^{\infty}$ function $c_{\lambda}(t)$ and define
  \begin{equation}\label{eq:f-lambda-def}
   f_{\lambda}(x,y)\coloneqq \sup_{t\in\mathbb R} c_{\lambda}(t)\bigl(e^{t}(1+x-t)-y\bigr), \quad (x,\, y)\in D.
  \end{equation}
With our goal in mind and also taking certain practical manipulations into
account, we are led to naturally choose
  \begin{equation*}\label{eq:c-lambda-def}
    c_{\lambda}(t) \coloneqq \exp\left(-\lambda\frac{e^{t}}{1+e^{t}}\right),     \quad 0<\lambda<1.
  \end{equation*}
It is evident that each $c_{\lambda}$ is decreasing, with
   $$
    \lim_{t\to-\infty}c_{\lambda}(t)=1 \qquad {\rm and } \qquad   \lim_{t\to+\infty}c_{\lambda}(t)=e^{-\lambda}.
   $$

We now investigate the properties of the family $\{f_{\lambda}\}_{\lambda\in(0,\, 1)}$.

\begin{lemma}\label{lem:global-estimates}
Each $f_{\lambda}$ is convex on $D$ and is increasing in its first variable. Moreover, it satisfies the following two-sided estimate:
  $$
  e^{x}-y \leq f_{\lambda}(x,\, y) \leq e^{-\lambda}(e^{x}-y),   \quad (x,\, y)\in D.
  $$
\end{lemma}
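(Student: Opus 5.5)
The plan is to read all three properties directly off the envelope formula \eqref{eq:f-lambda-def}. For each fixed $t\in\mathbb R$, the function
\[
\ell_t(x,y)\coloneqq c_\lambda(t)\bigl(e^{t}(1+x-t)-y\bigr)
\]
is affine in $(x,y)$. Its $x$-coefficient $c_\lambda(t)e^t$ is positive.

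\emph{Convexity and monotonicity.} First I check that $f_\lambda$ is finite on $D$; this follows from the upper bound below. Once that is in hand, $f_\lambda$ is a pointwise supremum of affine functions, so it is convex on the convex set $D$. Each $\ell_t$ is increasing in $x$, so the supremum is nondecreasing in $x$ as well.

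\emph{Lower bound.} Fix $(x,y)\in D$ and take $t=x$. Then $\ell_x(x,y)=c_\lambda(x)(e^x-y)$. On $D$ we have $e^x-y<0$, and $c_\lambda(x)\le 1$, so
\[
c_\lambda(x)(e^x-y)\ge e^x-y.
\]
Hence $f_\lambda(x,y)\ge e^x-y$.

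\emph{Upper bound.} The key observation is that, by convexity of $e^x$, the tangent line gives
\[
e^t(1+x-t)\le e^x \quad\text{for all } t.
\]
Therefore $e^t(1+x-t)-y\le e^x-y<0$. Multiplying by $c_\lambda(t)\ge e^{-\lambda}>0$ flips the direction relative to the negative quantity, so
\[
\ell_t(x,y)\le c_\lambda(t)(e^x-y)\le e^{-\lambda}(e^x-y).
\]
Taking the supremum over $t$ gives $f_\lambda\le e^{-\lambda}(e^x-y)$. This also shows finiteness, which closes the convexity argument.

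\emph{Main obstacle.} The only step that needs care is the sign bookkeeping in the upper bound. Both $e^x-y$ and the tangent-line expressions are negative on $D$, and that is why a smaller multiplier $c_\lambda(t)$ gives a larger value. The lower bound uses the opposite bound $c_\lambda\le 1$ on the same negative quantity. Everything else is formal.
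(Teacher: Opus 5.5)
Your proposal is correct and follows the paper's proof essentially step for step. Convexity and monotonicity come from writing $f_\lambda$ as an envelope of affine functions with positive $x$-coefficient, the lower bound from choosing $t=x$ and using $c_\lambda\le 1$, and the upper bound from the tangent-line inequality $e^t(1+x-t)\le e^x$ combined with $c_\lambda\ge e^{-\lambda}$ applied to the negative quantity $e^x-y$ (your explicit note that this upper bound also guarantees finiteness of $f_\lambda$ is a small point the paper leaves implicit).
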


\begin{proof}
Being the supremum of a family of (negative) affine functions, $f_{\lambda}$ is convex on $D$. It is also evident that $f_{\lambda}$ is increasing in its first variable.

As for the estimate for  $f_{\lambda}$, the convexity of the exponential gives
   $$
    e^t(1+x-t)\leq e^x,  \quad x,\, t\in\mathbb R.
   $$
Since $e^x-y<0$ on $D$ and $c_{\lambda}(t)\geq e^{-\lambda}$ on $\mathbb R$, each member of the family defining $f_{\lambda}$ is at most $e^{-\lambda}(e^x-y)$. Hence
   $$
   f_{\lambda}(x,\, y) \leq e^{-\lambda}(e^{x}-y) \quad {\rm on }\ \, D.
   $$
On the other hand, choosing $t=x$ on the right-hand side of \eqref{eq:f-lambda-def} yields
   $$
   f_{\lambda}(x,\, y) \geq c_{\lambda}(x)(e^x-y) \geq e^x-y   \quad {\rm on }\ \, D.
   $$
The proof is complete.
\end{proof}

The next lemma establishes the smoothness of the family $\{f_{\lambda}\}_{\lambda\in(0,\, 1)}$ and the rank-one structure of their real Hessians.

\begin{lemma}\label{lem:unique-maximizer} For every $\lambda\in(0,\, 1)$ and $(x,\, y)\in D$, the supremum defining $f_{\lambda}(x,\,y)$ is attained at a unique point $t=t_{\lambda}(x,\,y)$. Moreover, this maximizer depends smoothly on $(x,\, y)$  so that $f_{\lambda}$ is
smooth, with
   $$
   {\rm grad}\, f_{\lambda}=c_{\lambda}\circ t_{\lambda} \bigl(e^{t_{\lambda}},\, -1\bigr)
   \qquad {\rm and } \qquad  \rank {\rm Hess}_{\mathbb R}f_{\lambda}\equiv 1.
   $$
\end{lemma}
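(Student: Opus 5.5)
Fix $\lambda\in(0,1)$ and $(x,y)\in D$, and set
\[
F(t)\coloneqq c_\lambda(t)\bigl(e^{t}(1+x-t)-y\bigr), \qquad t\in\mathbb R.
\]
The plan is to show that $F$ has exactly one critical point, that this critical point is a strict maximum, and that it is nondegenerate. The implicit function theorem then gives smoothness of $t_\lambda$, and an envelope argument gives the gradient and the Hessian rank.

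\textbf{Existence and uniqueness of the maximizer.} Write $g(t)=e^t(1+x-t)-y$. Then $g'(t)=e^t(x-t)$, and $g$ has maximum value $e^x-y<0$ at $t=x$. As $t\to-\infty$ we have $g\to -y$, and as $t\to+\infty$ we have $g\to-\infty$. Since $c_\lambda$ is bounded between $e^{-\lambda}$ and $1$, it follows that $F(t)\to -y$ as $t\to-\infty$ and $F(t)\to-\infty$ as $t\to+\infty$. Moreover $F(x)=c_\lambda(x)(e^x-y)$. Comparing this with the limit $-y$ is delicate, so instead I will analyse the critical point equation directly. Since $c_\lambda'=-\lambda c_\lambda\, s(1-s)$ with $s=e^t/(1+e^t)$, the equation $F'(t)=0$ reads
\[
e^t(x-t)=\lambda\,\frac{e^t}{(1+e^t)^2}\,\bigl(e^t(1+x-t)-y\bigr).
\]
Dividing by $e^t$ and putting $\kappa(t)=\lambda(1+e^t)^{-2}$, this becomes
\[
\Phi(t)\coloneqq (x-t)-\kappa(t)\bigl(e^t(1+x-t)-y\bigr)=0 .
\]
The sign of $F'$ equals the sign of $\Phi$. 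The bracket is negative, so $\Phi(t)>0$ for $t\le x$. For $t>x$, I will show that $\Phi$ is strictly decreasing and tends to $-\infty$. This step is where the choice $0<\lambda<1$ should enter. Differentiating,
\[
\Phi'(t)=-1-\kappa'(t)\,g(t)-\kappa(t)\,e^t(x-t).
\]
Here $\kappa'<0$ and $g<0$, so the term $-\kappa' g$ is negative. The last term $-\kappa e^t(x-t)$ is positive for $t>x$. Using
\[
\kappa e^t=\lambda e^t/(1+e^t)^2\le \lambda/4,
\]
that positive term is at most $\tfrac{\lambda}{4}(t-x)$, which is unbounded. So monotonicity is not automatic, and this is the step I expect to be the main obstacle.

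As a fallback I will argue via the zero set. At any zero of $\Phi$ with $t>x$ we have $t-x=-\kappa g>0$, and I will substitute this into $\Phi'$. If that substitution cannot be controlled, I will instead show directly that at every critical point $F''<0$, i.e.\ $F$ is strictly ``quasi-concave'' at its critical points. For this, write $F=c_\lambda g$ and use $F'=0$ to get
\[
F''=c_\lambda''g+2c_\lambda'g'+c_\lambda g''
\]
with $g'=-(c_\lambda'/c_\lambda)g$. Then $F''/(c_\lambda g)$ becomes an explicit function of $s$ and $t-x$, which I will bound using $\lambda<1$ and $s(1-s)\le 1/4$. A smooth function all of whose critical points are strict local maxima, and which has limits at $\pm\infty$ that are not attained, has exactly one critical point. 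Together with $F'>0$ on $(-\infty,x]$ and $F\to-\infty$ at $+\infty$, this gives a unique global maximizer $t_\lambda(x,y)>x$.

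\textbf{Smoothness, gradient and Hessian.} Since $F''(t_\lambda)\neq 0$ and $\partial_tF$ is smooth in $(t,x,y)$, the implicit function theorem shows that $t_\lambda$ is smooth on $D$. Hence $f_\lambda(x,y)=F(t_\lambda(x,y))$ is smooth. By the envelope theorem,
\[
{\rm grad}\, f_\lambda=\partial_{(x,y)}F\big|_{t=t_\lambda}=c_\lambda(t_\lambda)\bigl(e^{t_\lambda},-1\bigr).
\]
This gradient depends on $(x,y)$ only through the single function $t_\lambda$, so the Jacobian of ${\rm grad}\, f_\lambda$ is $\bigl(\tfrac{\dd}{\dd t}\,c_\lambda(t)(e^t,-1)\bigr)\big|_{t=t_\lambda}\otimes {\rm grad}\,t_\lambda$. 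It therefore has rank at most one. The vector $\tfrac{\dd}{\dd t}\bigl(c_\lambda(t)(e^t,-1)\bigr)$ has second component $-c_\lambda'>0$, so it never vanishes. Differentiating the identity $\partial_tF(t_\lambda,x,y)=0$ in $y$ gives $\partial_y t_\lambda=-\partial_t\partial_yF/\partial_t^2F$, where $\partial_t\partial_y F=-c_\lambda'\neq0$. Hence ${\rm grad}\,t_\lambda\neq0$ as well, and the rank is exactly one.
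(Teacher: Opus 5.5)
Your overall route is the paper's. Your $\Phi$ coincides with the paper's $F=E'/(e^tc_\lambda)$ (your $\kappa$ is the paper's $\kappa$ times $e^{-t}$), and your implicit-function, envelope and rank-one steps match the paper's.

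\textbf{The gap.} The one step that carries the content of the lemma is never proved: that $\Phi$ has exactly one zero, and that this zero is nondegenerate. You set up a monotonicity argument, observe that it may fail, and then list two fallbacks introduced by ``I will''. The primary plan does fail. With $\kappa=\lambda(1+e^t)^{-2}$, your formula for $\Phi'$ simplifies to
\[
\Phi'(t)=-1+\frac{\lambda e^t}{(1+e^t)^3}\bigl[2e^t-2y+(t-x)(1-e^t)\bigr].
\]
For $\lambda=\tfrac12$, $x=-100$, $y=2e^{-100}$, $t=-1$ this is about $3.55>0$. So $\Phi$ need not be monotone on $(x,\infty)$, and you can only argue at zeros.

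\textbf{How to close it.} Your first fallback works in two lines, and it is exactly the paper's argument. At a zero, $x-t=\kappa g$, so $-\kappa e^t(x-t)=-e^t\kappa^2 g$ and
\[
\Phi'(t)=-1-g(t)\bigl(\kappa'(t)+e^t\kappa(t)^2\bigr),\qquad
\kappa'+e^t\kappa^2=-\frac{\lambda e^t(2+2e^t-\lambda)}{(1+e^t)^4}<0 .
\]
As you noted, $g\le e^x-y<0$ on all of $\mathbb R$, so $\Phi'<-1$ at every zero. Combine this with $\Phi>0$ on $(-\infty,x]$ and $\Phi\to-\infty$ as $t\to+\infty$. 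Then $\Phi$ has exactly one zero $t_\lambda(x,y)$, and $F''(t_\lambda)=c_\lambda(t_\lambda)e^{t_\lambda}\Phi'(t_\lambda)<0$. That is the nondegeneracy your implicit-function step needs.

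Your second fallback is the same computation in disguise. At a critical point,
\[
\frac{F''}{c_\lambda g}=\lambda s^2(1-s)\bigl(2-\lambda(1-s)\bigr)+\frac{\lambda s(1-s)}{t-x}>0 .
\]
Only $\lambda\le 2$ is used, consistent with the remark after the lemma in the paper.

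\textbf{What you do better.} Your positivity of $\Phi$ on $(-\infty,x]$ is a slightly cleaner substitute for the paper's observation that $ye^{-t}>1$ at zeros. Your rank argument via $\partial_t\partial_yF=-c_\lambda'\neq0$, which gives $\partial_y t_\lambda\neq0$, is correct. It also avoids the paper's need for $c_\lambda+c_\lambda'\neq0$ when writing the Hessian explicitly.
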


\begin{proof}
Given $\lambda\in(0,\, 1)$ and $(x,\, y)\in D$, we write $c=c_{\lambda}$ and
   $$
   E(t)\coloneqq c(t)\bigl(e^{t}(1+x-t)-y\bigr).
   $$
A straightforward calculation shows
   $$
   F(t)\coloneqq \frac{E'(t)}{e^{t}c(t)}
   =\bigl(1-\kappa(t)\bigr)(x-t)-\kappa(t)\bigl(1-ye^{-t}\bigr),
   $$
where
   $$
   \kappa(t)\coloneqq -\frac{c'(t)}{c(t)}=\frac{\lambda e^{t}}{(1+e^{t})^{2}}.
   $$
Since $0<\kappa(t)\leq \lambda/4 <1/4$, it follows that
 \begin{equation}\label{eq:F-limits}
    \lim_{t\to-\infty}F(t)=+\infty  \qquad {\rm and } \qquad \lim_{t\to+\infty}F(t)=-\infty.
 \end{equation}

We next show that $F$ admits exactly one zero point. Once this is proved, it follows immediately that $E$ attains its global strict maximum at this point,
as desired. To this end, first observe that at each zero point of $F$,
  \begin{equation}\label{eq:critical-identity}
    x-t = \frac{\kappa(t)}{1-\kappa(t)}\bigl(1-ye^{-t}\bigr).
  \end{equation}
From this and the fact that $y>e^{x}$, it follows that $ye^{-t}>1$. (Otherwise $1-ye^{-t}\geq0$, and the preceding equality would give $x\geq t$, hence $ye^{-t}>e^{x-t}\geq1$, a contradiction.) Note also that
   $$
    \kappa'(t)-\kappa(t)\bigl(1-\kappa(t)\bigr)
    =
    -\frac{\lambda e^{2t}(2+2e^{t}-\lambda)}{(1+e^{t})^{4}}
    <0.
   $$
Now differentiating $F$ and using this fact together with \eqref{eq:critical-identity}, we conclude that at each zero point of $F$,
  $$
  F'(t)=\left(\frac{\kappa'(t)}{1-\kappa(t)}-\kappa(t) \right)\bigl(ye^{-t}-1\bigr)-1 <-1.
  $$
It follows from this and \eqref{eq:F-limits} that $F$ admits exactly one zero point, which we denote by $t=t_{\lambda}(x,\,y)$,  and since $F'$ does not vanish there, the resulting
function $D\ni (x,\,y)\mapsto t_{\lambda}(x,\,y)$ is smooth by the implicit function theorem.

The envelope theorem now gives
  $$
  {\rm grad}\, f_{\lambda}=c_{\lambda}\circ t_{\lambda} \bigl(e^{t_{\lambda}},\, -1\bigr).
  $$
Differentiating this identity with respect to $x$ and $y$, respectively, yields
\begin{equation*}\label{eq:hessian-f}\renewcommand{\arraystretch}{1.8}
 {\rm Hess}_{\mathbb R} f_{\lambda}=\frac{\partial t_{\lambda}}{\partial x}
   \begin{pmatrix}
    e^{t_{\lambda}} (c_{\lambda}+c_{\lambda}')\circ t_{\lambda}  & -c_{\lambda}'\circ t_{\lambda}\\
     -c_{\lambda}'\circ t_{\lambda}  &  \dfrac{e^{-t_{\lambda}}(c_{\lambda}'\circ t_{\lambda})^{2}}
    {(c_{\lambda}+c_{\lambda}')\circ t_{\lambda} }
\end{pmatrix}.
\end{equation*}
(Note that $c_{\lambda}(t)+c_{\lambda}'(t)\neq0$ for all $t\in\mathbb R$, as is easily verified.) Similarly, differentiating  \eqref{eq:critical-identity}---with $t_{\lambda}$ in place of $t$---with respect to $x$ yields
  $$
  \frac{\partial t_{\lambda}}{\partial x}\neq0 \quad {\rm on }\ \, D.
  $$
Hence $\rank {\rm Hess}_{\mathbb R} f_{\lambda}\equiv 1$, and the proof is complete.
\end{proof}

\begin{remark}
As indicated by the preceding proof, Lemma~\ref{lem:unique-maximizer} remains valid when $\lambda$ is allowed to take values in the broader range $(0,\,2]$.
\end{remark}

\begin{lemma}\label{lem:asymptotics}
 Each $f_{\lambda}$ exhibits the following asymptotic behavior:
   \begin{enumerate}[label=\textup{(\roman*)}, leftmargin=2.0pc, parsep=4pt]
     \item  $f_{\lambda}(x,\, y)$ converges locally uniformly to $-y$ on  $(0,\, \infty)$ as $x\to-\infty$;

     \item  $f_{\lambda}(x,\, y)/y$ converges locally uniformly to $-e^{-\lambda}$  on $\mathbb R$ as $y\to+\infty$.
   \end{enumerate}
\end{lemma}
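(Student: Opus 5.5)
The plan is to squeeze $f_{\lambda}$ between elementary bounds coming directly from the definition \eqref{eq:f-lambda-def}. Lower bounds come from testing the supremum at a single well-chosen value of $t$. Upper bounds come either from the global estimate of Lemma~\ref{lem:global-estimates} or from splitting the range of $t$ into two pieces. Uniformity will hold because all error terms depend only on $e^{x}$, $y$, and fixed auxiliary parameters, and these stay bounded on compact sets of the other variable.

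\emph{Part (i).} Fix a compact set $K\subset(0,\infty)$ of values of $y$.

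For the lower bound I take $t=x$ in \eqref{eq:f-lambda-def}, giving $f_{\lambda}(x,y)\ge c_{\lambda}(x)(e^{x}-y)$. As $x\to-\infty$ we have $c_{\lambda}(x)\to1$ and $e^{x}\to 0$, so the right-hand side tends to $-y$ uniformly for $y\in K$.

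For the upper bound I fix $T\in\mathbb R$ and split the supremum into $t\le T$ and $t\ge T$.
\begin{itemize}
\item For $t\le T$, the convexity inequality $e^{t}(1+x-t)\le e^{x}$ gives $e^{t}(1+x-t)-y\le e^{x}-y<0$. Since $c_{\lambda}$ is decreasing, $c_{\lambda}(t)\ge c_{\lambda}(T)$, so the corresponding terms are at most $c_{\lambda}(T)(e^{x}-y)$.
\item For $t\ge T$ and $x<T-1$, we have $1+x-t\le 1+x-T<0$, hence $e^{t}(1+x-t)\le e^{T}(1+x-T)$. Using $c_{\lambda}\ge e^{-\lambda}$ and the negativity of the bracket, these terms are at most $e^{-\lambda}\bigl(e^{T}(1+x-T)-y\bigr)$, which tends to $-\infty$ as $x\to-\infty$.
\end{itemize}
Consequently, uniformly for $y\in K$,
\[
\limsup_{x\to-\infty}\bigl(f_{\lambda}(x,y)+y\bigr)\le \bigl(1-c_{\lambda}(T)\bigr)\max K .
\]
Letting $T\to-\infty$ makes $c_{\lambda}(T)\to1$, which gives the matching upper bound.

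\emph{Part (ii).} Fix a compact set $K\subset\mathbb R$ of values of $x$.

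For the upper bound, Lemma~\ref{lem:global-estimates} gives
\[
\frac{f_{\lambda}(x,y)}{y}\le e^{-\lambda}\Bigl(\frac{e^{x}}{y}-1\Bigr)\longrightarrow -e^{-\lambda}
\]
uniformly for $x\in K$.

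For the lower bound, I test \eqref{eq:f-lambda-def} at a fixed large $t=T$ and divide by $y>0$:
\[
\frac{f_{\lambda}(x,y)}{y}\ge c_{\lambda}(T)\,\frac{e^{T}(1+x-T)}{y}-c_{\lambda}(T).
\]
The first term tends to $0$ uniformly for $x\in K$ as $y\to+\infty$, so $\liminf f_{\lambda}/y\ge -c_{\lambda}(T)$. Letting $T\to+\infty$ gives $c_{\lambda}(T)\to e^{-\lambda}$, which closes the squeeze.

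The only step needing care is the upper bound in (i). The global estimate alone yields only the weaker bound $e^{-\lambda}(e^{x}-y)$. The fix is the split at $T$: it shows that large values of $t$ contribute terms tending to $-\infty$. Hence the supremum is effectively taken over $t\le T$, where $c_{\lambda}(t)$ is already close to $1$ once $T$ is sufficiently negative.
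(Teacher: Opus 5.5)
Your proof is correct. For part (ii) it is essentially the paper's argument. The paper also gets one side from Lemma~\ref{lem:global-estimates} and the other by testing the envelope at a single value of $t$. It uses the $y$-dependent choice $t=\frac12\log y$, which is a diagonal version of your fixed $T$ followed by $T\to+\infty$.

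For part (i) your route genuinely differs. The paper does not split the parameter range. Instead it works with the unique maximizer $t=t_\lambda(x,y)$ from Lemma~\ref{lem:unique-maximizer} and the critical-point identity $x-t=\frac{\kappa(t)}{1-\kappa(t)}\bigl(1-ye^{-t}\bigr)$, in three steps:
\begin{enumerate}
\item It shows by contradiction that $t_\lambda(x,y)\to-\infty$ locally uniformly in $y$.
\item It uses $\kappa(t)\to0$ and $e^{-t}\kappa(t)\to\lambda$ to obtain $x-t_\lambda(x,y)\to-\lambda y$.
\item It evaluates the maximizing affine function, noting that $c_\lambda(t)\to1$ and $e^{t}(1+x-t)\to0$.
\end{enumerate}

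Your splitting at $T$ avoids the maximizer altogether. It uses only that $c_\lambda$ is decreasing, bounded below by $e^{-\lambda}>0$, and tends to $1$ at $-\infty$. So it is more elementary, independent of Lemma~\ref{lem:unique-maximizer}, and valid for any weight with these properties. The paper's route depends on the explicit form of $\kappa$ and on the existence of the maximizer. In exchange it gives extra information: the maximizer sits at $t_\lambda(x,y)=x+\lambda y+o(1)$ as $x\to-\infty$.
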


\begin{proof}
We first establish the relatively easy part, namely part (ii). By Lemma \ref{lem:global-estimates},
  $$
   \frac{f_{\lambda}(x,\, y)}{-y}\geq e^{-\lambda}\bigg(1-\frac{e^x}{y}\bigg)  \quad {\rm on }\ \, D.
  $$
On the other hand, choosing $t=\frac12\log y$ on the right-hand side of \eqref{eq:f-lambda-def} yields the upper bound estimate
  $$
  \frac{f_{\lambda}(x,\, y)}{-y}\leq c_{\lambda}\big(\frac12\log y\big)\bigg(1-\frac{1+x-\frac12\log y}{\sqrt y}\bigg)  \quad {\rm on }\ \, D.
  $$
The desired result then follows immediately by combining these two estimates with the fact that $\lim_{t\to+\infty}c_{\lambda}(t)=e^{-\lambda}$.

Next we prove part (i). Let $t=t(x,\,y)$ be the function determined in Lemma~\ref{lem:unique-maximizer}, and retain the notation
$\kappa=-c'/c$ introduced in its proof. Then the identity
  $$
  x-t(x,\,y)= \frac{\kappa\circ t(x,\,y) }{1-\kappa\circ t(x,\, y) }
    \bigl(1-ye^{-t(x,\, y)}\bigr)
  $$
holds on $D$. Note that as $x\to-\infty$, the function $t(x,\, y)$ tends to $-\infty$ locally uniformly in $y\in(0,\,\infty)$. If not, along some sequence with $y$ in a
fixed compact subset of $(0,\,\infty)$, the values of $t(x, \,y)$ would be bounded below. Then the left-hand side of the preceding identity would tend to
$-\infty$, while the right-hand side would remain bounded below, yielding a contradiction.

Now
   $$
   \lim_{x\to-\infty}\kappa\circ t(x,\, y)=0  \qquad {\rm and } \qquad
   \lim_{x\to-\infty}e^{-t(x,\, y)}\kappa\circ t(x,\, y)=\lambda.
   $$
In view of the preceding identity, this in turn implies
   $$
    \lim_{x\to-\infty} x-t(x,\, y) = -\lambda y.
   $$
In particular, $x-t(x,\, y)$ remains locally bounded. Evaluating the maximizing
member of the envelope in \eqref{eq:f-lambda-def} and using the fact that $\lim_{t\to -\infty}c_{\lambda}(t)=1$, we then conclude that
   $$
    f_{\lambda}(x,\, y)=c\circ t(x,\, y) \Big( e^{t(x,\, y)}\bigl(1+x- t(x,\, y)\bigr)-y \Big)
    \rightarrow  -y,
   $$
as desired. It is also easy to see that this convergence is uniform whenever $y$ stays in a compact subset of $(0,\, \infty)$.
\end{proof}

With the family $\{f_{\lambda}\}_{\lambda\in(0,\, 1)}$ constructed above at hand, we define
\begin{equation*}\label{eq:v-lambda-def}\renewcommand{\arraystretch}{1.8}
  v_{\lambda}(z,\, \zeta)\coloneqq
    \begin{cases}
       f_{\lambda}\bigl(\log|z|^{2},\, {\rm Im}\,\zeta\bigr), & \quad (z,\, \zeta)\in \mathcal{U}^{n+1}\!\setminus\!(\{0\}\times\mathbb C);\\
       \qquad -{\rm Im}\,\zeta, & \quad (0,\, \zeta)\in \mathcal{U}^{n+1}.
     \end{cases}
\end{equation*}

We are now in a position to state and prove the following theorem, which is the main result of this section and a reformulation of Theorem~\ref{thm:main2} on the Siegel upper half-space $\mathcal{U}^{n+1}$.

\begin{theorem}\label{thm:upper-half-space-family}
The family $\{v_{\lambda}\}_{\lambda\in(0,\, 1)}$ enjoys the following properties:
  \begin{enumerate}[label=\textup{(\roman*)}, leftmargin=2.0pc, parsep=4pt]
    \item Each $v_{\lambda}$ is continuous, plurisubharmonic on $\mathcal{U}^{n+1}$ and solves the homogeneous complex Monge--Amp\`ere equation
          $(\ddc v_{\lambda})^{n+1}=0$;

    \item Each $v_{\lambda}$ is comparable with $|z|^{2}-{\rm Im}\,\zeta$:
          $$
          |z|^{2}-{\rm Im}\,\zeta \leq  v_{\lambda}(z,\, \zeta) \leq e^{-\lambda} \bigl(|z|^{2}-{\rm Im}\,\zeta\bigr)
          \quad {\rm on }\ \, \mathcal{U}^{n+1},
          $$
          but is not proportional to $|z|^{2}-{\rm Im}\,\zeta$;
    \item $v_{\lambda}\neq v_{\mu}$ whenever $\lambda\neq\mu$.
   \end{enumerate}
\end{theorem}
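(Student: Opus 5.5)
The proof assembles the lemmas above. Plurisubharmonicity and the equation come from Lemma~\ref{lem:convex-psh}, the bounds from Lemma~\ref{lem:global-estimates}, continuity across $\{z=0\}$ from Lemma~\ref{lem:asymptotics}(i), and non-proportionality and distinctness from Lemma~\ref{lem:asymptotics}(ii).

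\emph{Step 1: away from $\{z=0\}$.} On $\mathcal U^{n+1}\setminus(\{0\}\times\mathbb C)$, Lemma~\ref{lem:unique-maximizer} shows $f_\lambda$ is smooth. Lemma~\ref{lem:global-estimates} shows it is convex and increasing in $x$, so $\partial f_\lambda/\partial x\ge 0$. Lemma~\ref{lem:convex-psh} then gives that $v_\lambda$ is smooth and plurisubharmonic there. Since $\rank {\rm Hess}_{\mathbb R} f_\lambda\equiv1$, the determinant formula gives $\det {\rm Hess}_{\mathbb C} v_\lambda=0$, that is, $(\ddc v_\lambda)^{n+1}=0$ off $\{z=0\}$.

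\emph{Step 2: continuity and the two-sided bound.} At a point $(0,\zeta_0)$ we have ${\rm Im}\,\zeta_0>0$. Lemma~\ref{lem:asymptotics}(i) says $f_\lambda(x,y)\to -y$ locally uniformly for $y$ in compact subsets of $(0,\infty)$ as $x\to-\infty$. Hence $v_\lambda(z,\zeta)\to-{\rm Im}\,\zeta_0$ as $(z,\zeta)\to(0,\zeta_0)$, and $v_\lambda$ is continuous on $\mathcal U^{n+1}$. Substituting $x=\log|z|^2$ into Lemma~\ref{lem:global-estimates} gives the estimate in (ii) off $\{z=0\}$. On $\{z=0\}$ the estimate reads $-{\rm Im}\,\zeta\le-{\rm Im}\,\zeta\le -e^{-\lambda}{\rm Im}\,\zeta$, which holds trivially.

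\emph{Step 3: extension across the hypersurface.} The set $\{z=0\}$ is a complex submanifold of codimension $n\ge1$, hence pluripolar. A function that is continuous on $\mathcal U^{n+1}$ and plurisubharmonic off a closed pluripolar set, and locally bounded near it, is plurisubharmonic everywhere by the removable singularity theorem. For a locally bounded psh function, the Bedford--Taylor measure $(\ddc v_\lambda)^{n+1}$ puts no mass on pluripolar sets. Therefore $(\ddc v_\lambda)^{n+1}=0$ on all of $\mathcal U^{n+1}$, which proves (i). I expect this to be the most delicate step conceptually, but both facts are standard pluripotential theory.

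\emph{Step 4: non-proportionality and distinctness.} Along $z=0$ we have $v_\lambda/\rho=1$. Fix $z\neq0$ and let ${\rm Im}\,\zeta\to+\infty$. Lemma~\ref{lem:asymptotics}(ii) gives $v_\lambda/(|z|^2-{\rm Im}\,\zeta)\to e^{-\lambda}\neq1$. So the ratio $v_\lambda/\rho$ is not constant, and $v_\lambda$ is not proportional to $\rho$. The same limit $e^{-\lambda}$ separates the different parameters: since $e^{-\lambda}\ne e^{-\mu}$ for $\lambda\ne\mu$, we get $v_\lambda\ne v_\mu$, which proves (iii).
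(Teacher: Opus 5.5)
Your proposal is correct and follows the paper's proof essentially step for step. The ingredients match: smoothness, plurisubharmonicity and vanishing of $\det{\rm Hess}_{\mathbb C}v_\lambda$ off $\{0\}\times\mathbb C$ via Lemmas~\ref{lem:convex-psh}--\ref{lem:unique-maximizer}; continuity from Lemma~\ref{lem:asymptotics}(i); the removable singularity theorem together with the fact that Bedford--Taylor measures do not charge pluripolar sets; and non-proportionality and distinctness from $v_\lambda=\rho$ on $\{z=0\}$ combined with Lemma~\ref{lem:asymptotics}(ii).

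Two minor points, neither of which affects the argument. First, $\{0\}\times\mathbb C$ has complex codimension $n$, so it is a hypersurface only when $n=1$; you only use that it is closed and pluripolar. Second, you use implicitly that the psh extension agrees with $v_\lambda$ by continuity, which the paper states explicitly.
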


\begin{proof}
We first show that each $v_{\lambda}$ is continuous and plurisubharmonic  on $\mathcal{U}^{n+1}$. By Lemmas~\ref{lem:global-estimates} and \ref{lem:unique-maximizer}, each $f_{\lambda}$ is a $C^{\infty}$ convex function on $D$ that is increasing in its first variable. Lemma \ref{lem:convex-psh} therefore implies that each $v_{\lambda}$ is smooth and plurisubharmonic on $\mathcal{U}^{n+1} \!\setminus\! (\{0\}\times\mathbb C)$.  Together with Lemma~\ref{lem:asymptotics}(i),  this shows that $v_{\lambda}$ is continuous throughout $\mathcal{U}^{n+1}$. Moreover, $v_{\lambda}$ is plurisubharmonic on the whole $\mathcal{U}^{n+1}$, as the submean value property along complex lines follows readily from its construction and its plurisubharmonicity away from $\{0\}\times\mathbb C$.  Alternatively, one may deduce the desired plurisubharmonicity from the classical removable singularity theorem for plurisubharmonic functions (see, e.g., \cite[Chapter I, Theorem 5.24]{Demaillybook}), since $v_\lambda$ is locally bounded above near $\mathcal{U}^{n+1}\cap(\{0\}\times\mathbb C)$ and the
resulting plurisubharmonic extension agrees with $v_\lambda$ by
continuity.

We next show that
   $$
    (\ddc v_{\lambda})^{n+1}=0 \quad {\rm on }\ \, \mathcal{U}^{n+1}.
   $$
Since $v_{\lambda}$ is locally bounded on $\mathcal{U}^{n+1}$, its complex Monge--Amp\`ere measure $(\ddc v_{\lambda})^{n+1}$ does not charge pluripolar Borel subsets of $\mathcal{U}^{n+1}$; see, e.g., \cite{Bedford82, GZ-MAeqnbook}. In particular, it assigns zero mass to $\mathcal{U}^{n+1}\cap(\{0\}\times\mathbb C)$. It therefore suffices to show that $(\ddc v_{\lambda})^{n+1}=0$ on $\mathcal{U}^{n+1} \!\setminus\! (\{0\}\times\mathbb C)$. This follows immediately from Lemmas~\ref{lem:convex-psh} and \ref{lem:unique-maximizer}.

In addition, Lemma~\ref{lem:global-estimates} together with continuity shows that each $v_{\lambda}$ is comparable with the function
    $$
    \rho(z,\, \zeta)=|z|^{2}-{\rm Im}\,\zeta.
    $$
Now what remains is to show that each $v_{\lambda}$ is not proportional to $\rho$ and that $v_{\lambda}\neq v_{\mu}$ whenever $\lambda\neq\mu$. For this, it suffices to note that
   $$
    v_{\lambda}=\rho \quad {\rm on }\ \, \mathcal{U}^{n+1}\cap(\{0\}\times\mathbb C)
   $$
and, by Lemma~\ref{lem:asymptotics}\textup{(ii)},
   $$
   \lim_{\mathbb R\ni y\to+\infty} \frac{v_{\lambda}(z,\, iy)}{\rho(z,\,iy)}=e^{-\lambda} \quad {\rm for}\ \,  z\in\mathbb C^{n} \!\setminus\! \{0\}.
   $$
\end{proof}

\end{document}